\theoremstyle{plain}
\newtheorem{theorem}{Theorem}[section]
\newtheorem{lemma}{Lemma}[section]
\newtheorem{proposition}{Proposition}[section]
\newtheorem{corollary}{Corollary}[section]
\theoremstyle{definition}
\newtheorem{definition}{Definition}[section]
\theoremstyle{remark}
\newtheorem{remark}{Remark}[section]
\title[Diffeomorphism type of small hyperplane arrangements]{The diffeomorphism 
type of small hyperplane arrangements is combinatorially determined}
\author[Matteo Gallet]{Matteo Gallet}
\address{(Matteo Gallet) RICAM, Austrian Academy of Sciences, Altenberger Stra{\ss}e 69, A-4040, 
Linz, AT.}
\email{matteo.gallet@ricam.oeaw.ac.at}
\author[Elia Saini]{Elia Saini}
\address{(Elia Saini) Department of Mathematics, University of Fribourg, Chemin 
du Mus\'ee 23, CH-1700, Fribourg, CH.}
\email{elia.saini@unifr.ch}
\begin{document}

\begin{abstract}

It is known that there exist hyperplane arrangements with same underlying matroid that admit 
non-homotopy equivalent complement manifolds. In this work we show that, in any rank, complex 
central hyperplane arrangements with up to 7 hyperplanes and same underlying 
matroid are isotopic. In particular, the diffeomorphism type of the complement 
manifold and the Milnor fiber and fibration of these arrangements are 
combinatorially determined, that is, they depend uniquely on the underlying 
matroid. To do this, we associate to every such matroid a topological space, 
that we call the \emph{reduced realization space}; its connectedness --- showed 
by means of symbolic computation --- implies the desired result.

\end{abstract}

\maketitle

\section*{Introduction}
\label{intro}
The central problem in hyperplane arrangement theory is to determine whether the 
topology or the homotopy type of the complement manifold of an arrangement is 
described by the combinatorial properties of the arrangement itself. 
This theory was first developed in~\cite{Arn69} with motivations in the 
study of configuration spaces.

One of the seminal works on the homotopy theory of complex hyperplane 
arrangements is the computation of the integer cohomology algebra structure of 
the complement manifold of an arrangement by Orlik and Solomon~\cite{OS80}. 
Motivated by work of Arnol'd, they exploited techniques of 
Brieskorn~\cite{Bri73} to provide a presentation of this cohomology algebra, in 
terms of generators and relations, that depends uniquely on the underlying 
matroid of the arrangement. 

The result of~\cite{OS80} has generated a lot of new conjectures and problems, 
asking which homotopy invariants of the complement manifold of an arrangement 
are combinatorially determined. A cornerstone in this direction is the isotopy 
theorem proved by Randell in~\cite{Ran89}. It states that the diffeomorphism 
type of the complement manifold does not change through an isotopy, that is a 
smooth one-parameter family of arrangements with constant underlying matroid. 
Afterwards, in~\cite{Ran97} Randell showed similar results for more 
sophisticated invariants such as the Milnor fiber and fibration of an 
arrangement (compare Definition~\ref{Milnor}).

Randell's isotopy theorem can be actually reformulated in terms of matroid realization 
spaces, that are related to the well studied matroid stratification of the 
Grassmannian. In their celebrated paper~\cite{GGMS87}, Gel'fand, Goresky, 
MacPherson and Serganova studied this stratification and described some of its 
equivalent reformulations. In particular, Randell's results give rise to the 
problem of describing the connected components of the matroid strata of the 
Grassmannian.

On the other hand, in~\cite{Ryb11} Rybnikov found an example of arrangements 
with same underlying matroid but non-isomorphic fundamental groups of the 
corresponding complement manifolds. However, in many remarkable cases the 
topology of the complement manifold can be still recovered simply by the 
combinatorial data. Thus, one important problem
is to characterize wider families of arrangements for which Randell's
isotopy theorem holds.

Several results in this direction appeared in the literature. 
In particular, Jiang and Yau~\cite{Jy98}, Nazir and Yoshinaga~\cite{NY12} and 
Amram, Teicher and Ye~\cite{ATY13} focused their attention on some specific 
classes of line arrangements in the complex projective plane. 
One-parameter families of isotopic arrangements have also been studied 
in~\cite{WY07}, \cite{WY08} and~\cite{YY09}. 
However, the techniques developed in these works 
seem hardly generalizable to higher dimensions.

To every matroid~$M$ we can associate the set of hyperplane arrangements 
having~$M$ as underlying matroid. Such set has a natural topological structure 
as a subset of a space of matrices, and it is called the \emph{realization 
space} of~$M$. Here, building on previous results of Delucchi and the 
first-named author (see~\cite{DS15}), we associate to a matroid another 
topological space, called its \emph{reduced realization space} 
(Definition~\ref{reducedrealizationspace}). As the name suggests, the latter is 
a subset of the realization space, and it is obtained by considering hyperplane 
arrangements of a given shape. Such shape is determined by what we call the 
\emph{normal frame} of a matrix (Definition~\ref{normalframe}). Exploiting some 
ideas from~\cite{BL73} and~\cite{Rui13} we study this space, and finally we 
describe (Proposition~\ref{keytool}) how the connectedness of the reduced 
realization space is related to the one of the ``classic'' realization space. 
Moreover, we show by means of symbolic computation and elementary algebraic 
geometry arguments that for any matroid with ground set of up to~$7$ elements 
the associated reduced realization space is either empty or connected.

So that, from the results of~\cite{Ran89} and~\cite{Ran97} we can 
conclude that the diffeomorphism type of the complement manifold 
and the Milnor fiber and fibrations of complex central hyperplane arrangements 
with up to 7 hyperplane are combinatorially determined, that is, they depend uniquely 
on the underlying matroid of these arrangements.

\smallskip

\noindent \textbf{Overview.} Section~\ref{Section1} contains some basic 
definitions on matroids and complex hyperplane arrangements. In 
Section~\ref{Section2}, we introduce the normal frame of a matrix and the 
reduced realization space of a matroid, and we deduce some of their properties. 
Section~\ref{Section3} is devoted to applications in the study of the isotopy 
type of arrangements with up to 7 hyperplanes. For readability’s sake we 
postpone some of the technical computations to Appendix~\ref{AppendixA}.
  
\smallskip 

\noindent \textbf{Acknowledgments}
We thank Emanuele Delucchi, Peter Michor, 
Fernando Muro, Rita Pardini and Peter Scheiblechner for the opportunity to 
discuss part of our work with them. We want to mention Beno\^{i}t 
Guerville-Ball\'{e} and Torsten Hoge for the very helpful conversations we had 
at the conference ``Hyperplane Arrangements and Reflection Groups'' held in 
Hannover in August 2015. Moreover, we thank Michael Falk and Masahiko Yoshinaga 
for feedback about the results of Section~\ref{Section3}. Finally, we would like 
to thank the anonymous referee for its patient work and its very useful suggestions.
The first-named author is supported by the Austrian Science Fund (FWF): W1214-N15/DK9 and P26607 - 
``Algebraic Methods in Kinematics: Motion Factorisation and Bond Theory''.
The second-named author is supported by the Swiss National Science Foundation 
grant PP00P2\_150552/1. 
 
\section{Matroids and arrangements}\label{Section1}

In this section we provide a quick review of some basic definitions and results 
about matroids and arrangements. We refer to the book~\cite{Oxl92} for a 
detailed treatment of matroid theory and we point to~\cite{OT92} for a general 
theory of arrangements and to~\cite{FR00} for a survey of their homotopy theory.
   
 \subsection{Matroids}

A \emph{matroid}~$M$ is a pair $(E,\mathfrak{I})$, 
where $E$ is a finite \emph{ground set} and $\mathfrak{I}\subseteq 2^{E}$ is a 
family of subsets of $E$ satisfying the following three conditions:
\begin{enumerate}[label=(I\arabic{*})]
   \item \label{I1} $\emptyset\in\mathfrak{I}$;
   \item \label{I2} If $I\in\mathfrak{I}$ and $J\subseteq I$, then 
                    $J\in\mathfrak{I}$;
   \item \label{I3} If $I$ and $J$ are in $\mathfrak{I}$ and $|I|<|J|$, then there 
                    exists an element $e\in J\setminus I$ 
                    such that $I\cup\{e\}\in\mathfrak{I}$.
  \end{enumerate}
The elements of~$\mathfrak{I}$ are called the \emph{independent sets} of~$M$.
Maximal independent sets (with respect to inclusion) are called \emph{bases}, 
and the set of bases of $M$ will be denoted by~$\mathfrak{B}$. 
By definition, the \emph{rank} of a subset $S\subseteq E$ is
\begin{equation*}
 \operatorname{rk}(S)=\max \bigl\{
                         |S\cap B| \,
                         \mid \,
                         B\in\mathfrak{B}
                        \bigr\}
\end{equation*}
and the \emph{rank} of the matroid~$M$ is the rank of the ground 
set~$E$. 

A rank~$d$ matroid $M$ with ground set $E=\{1,\ldots,m\}$ is 
called \emph{realizable} over~$\mathbb{C}$ if 
there exists a matrix $A\in M_{d,m}(\mathbb{C})$ of $d$ rows and $m$ columns 
with complex coefficients such that 
\begin{equation*}
 \left\lbrace
   J\subseteq E \,
   \left| \,
     \left\lbrace
       A^{j}
     \right\rbrace_{j\in J}
   \right.
   \text{is linearly independent over }\mathbb{C}
 \right\rbrace
\end{equation*}
is the family of independent sets of~$M$. Here, $A^{j}$ denotes the 
$j\text{-th}$ column of~$A$. We say that $A$ \emph{realizes}~$M$ 
over~$\mathbb{C}$. 

\begin{definition}[Realization space]\label{realizationspace}
 For a rank~$d$ matroid~$M$ with ground set $E=\{1,\ldots,m\}$ 
 the \emph{realization space} of~$M$ over~$\mathbb{C}$ is  
 the set $\mathcal{R}_{\mathbb{C}}(M)$ of matrices $A\in M_{d,m}(\mathbb{C})$ 
 that satisfy the following condition: 
 \begin{itemize}
  \item $A$ realizes $M$ over $\mathbb{C}$.
 \end{itemize}
We endow $\mathcal{R}_{\mathbb{C}}(M)$ with the subspace topology 
of~$M_{d,m}(\mathbb{C})$.
\end{definition}

If $\mathcal{R}_{\mathbb{C}}(M)$ is empty, that is, there are no matrices that 
realize~$M$ over~$\mathbb{C}$, we say that $M$ is \emph{non-realizable} 
over~$\mathbb{C}$.
   
 \subsection{Arrangements}
   
Any finite collection $\mathcal{A}=\{H_{1},\ldots,H_{m}\}$ of affine subspaces 
in $\mathbb{C}^{d}$ will be called an \emph{arrangement}.
Its \emph{complement manifold} $M(\mathcal{A})$ is the complement 
of the union of the~$H_{i}$ in~$\mathbb{C}^{d}$. 
The arrangement is \emph{central} if every $H_{i}$ contains the origin.  
For an arrangement $\mathcal{A}=\{H_{1},\ldots, H_{m}\}$ in~$\mathbb{C}^{d}$ 
we assign a \emph{rank} to each subset 
$S\subseteq\{1,\ldots,m\}$ by setting
\begin{equation*}
\operatorname{rk}_{\mathcal{A}}(S) = \operatorname{codim}\bigcap_{i\in S}H_{i}
\end{equation*}
(where we define the empty set to have codimension $d+1$).

We say that the arrangements $\mathcal{A}=\{H_{1},\ldots, H_{m}\}$ 
and $\mathcal{B}=\{K_{1},\ldots,K_{m}\}$
have the same \emph{combinatorial type} if the functions 
$\operatorname{rk}_{\mathcal A}$ and $\operatorname{rk}_{\mathcal B}$ coincide. 

Given an open interval $(a,b)\subseteq\mathbb{R}$, a \emph{smooth one-parameter} 
family of arrangements is a collection $\{\mathcal{A}_{t}\}_{t\in(a,b)}$ of 
arrangements $\mathcal{A}_{t}=\{H_{1}(t),\ldots,H_{m}(t)\}$ in $\mathbb{C}^{d}$ 
such that there exist smooth functions from~$(a,b)$ to~$\mathbb{C}$ for the 
coefficients of the defining equations of the subspaces $H_{i}(t)$. With a 
slight abuse of notation we write~$\mathcal{A}_{t}$ for 
$\{\mathcal{A}_{t}\}_{t\in(a,b)}$, omitting the interval of parameters~$(a,b)$.

\begin{definition}[Isotopic arrangements]\label{isotopic}
 A smooth one-parameter family of arrangements~$\mathcal{A}_{t}$ 
 is an \emph{isotopy} if for any~$t_{1}$ and~$t_{2}$ the 
 arrangements~$\mathcal{A}_{t_{1}}$ and~$\mathcal{A}_{t_{2}}$ have the same 
 combinatorial type.
 In this case we say that $\mathcal{A}_{t_{1}}$ and $\mathcal{A}_{t_{2}}$ are 
\emph{isotopic}.
\end{definition}

The following theorem, sometimes referred to as ``isotopy theorem'' was proved 
by Randell~\cite{Ran89}. This is one of the pillars on which our work is based, 
allowing us, from now on, 
to focus on isotopic arrangements.

\begin{theorem}[{\cite{Ran89}}]\label{RIT}
 If $\mathcal{A}_{t_1}$ and $\mathcal{A}_{t_2}$ are isotopic arrangements, then 
 the complement manifolds $M(\mathcal{A}_{t_{1}})$ and $M(\mathcal{A}_{t_{2}})$ 
are diffeomorphic.
\end{theorem}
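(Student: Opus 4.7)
The plan is to construct a smooth family of diffeomorphisms $\phi_t\colon \mathbb{C}^d \to \mathbb{C}^d$, $t\in[t_1,t_2]$, with $\phi_{t_1}=\operatorname{id}$ and $\phi_t(\mathcal{A}_{t_1}) = \mathcal{A}_t$, which then restricts to the required diffeomorphism $M(\mathcal{A}_{t_1})\to M(\mathcal{A}_{t_2})$. Following the classical approach I would obtain $\phi_t$ as the flow at time $t-t_1$ of a suitable time-dependent vector field on $\mathbb{C}^d$, constructed by lifting $\partial/\partial t$ on the parameter interval to the total space $\mathbb{C}^d\times(a,b)$ in a way tangent to each hypersurface $\widetilde{H}_i \coloneqq \bigcup_{s} H_i(s)\times\{s\}$.

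After choosing smooth defining affine forms $\ell_i(t,\cdot)$ for each $H_i(t)$, I would seek a smooth vector field $V = W(t,x) + \partial/\partial t$ on $\mathbb{C}^d\times(a,b)$ satisfying
\[
W(t,x)\bigl(\ell_i(t,\cdot)\bigr) + \partial_t \ell_i(t,x) = 0 \qquad \text{whenever } \ell_i(t,x)=0
\]
for every $i$. At a point $(x_0,t_0)$ this becomes a linear system for $W(t_0,x_0)\in\mathbb{C}^d$ whose coefficient matrix has rows $\nabla_x \ell_i(t_0,x_0)$ indexed by those $i\in S_0$ with $\ell_i(t_0,x_0)=0$. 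Its rank is $\operatorname{rk}_{\mathcal{A}_{t_0}}(S_0)$, which by Definition~\ref{isotopic} is constant along the family; the constancy of the combinatorial type also forces the linear dependencies among the $\ell_i$ to remain synchronized in $t$, so the right-hand side lies in the image of the coefficient matrix and the system is consistent. A smooth local solution $W$ therefore exists near $(x_0,t_0)$, and a partition of unity argument on $\mathbb{C}^d\times(a,b)$ glues these local choices into a globally defined smooth $V$ whose flow preserves each $\widetilde{H}_i$.

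The main obstacle is completeness of the flow: the trajectories must exist on all of $[t_1,t_2]$ and through every initial point of $\mathbb{C}^d$. Since the forms $\ell_i$ are affine in $x$, the local solutions for $W$ can be chosen affine in $x$ as well, and by taking a partition of unity whose members are supported in slabs of the form $\mathbb{C}^d\times I$ with $I\Subset(a,b)$ one preserves the estimate $\|W(t,x)\|=O(1+\|x\|)$ on compact $t$-subintervals. A Gr\"onwall argument then yields global existence of the flow of $V$ on $[t_1,t_2]$. Evaluating the resulting flow at time $t_2-t_1$ produces the diffeomorphism $\phi_{t_2}\colon\mathbb{C}^d\to\mathbb{C}^d$ sending $\mathcal{A}_{t_1}$ onto $\mathcal{A}_{t_2}$, whose restriction to $M(\mathcal{A}_{t_1})$ is the desired diffeomorphism onto $M(\mathcal{A}_{t_2})$.
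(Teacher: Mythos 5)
The paper does not prove this theorem; it cites it from Randell~\cite{Ran89} and uses it as a black box. Your sketch is, in spirit, the strategy that Randell does use: the arrangement family defines a stratification of $\mathbb{C}^d\times(a,b)$, and one produces a controlled vector field lifting $\partial/\partial t$ whose flow trivialises the family (this is the Thom--Mather first isotopy lemma, with the complement being the open stratum on which the trivialisation is smooth). So the overall approach is the right one and matches the source.

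There are, however, two points where you assert more than you prove, and both are the genuine content of the argument. First, the consistency of the linear system for $W(t_0,x_0)$ is stated as a consequence of ``the linear dependencies remaining synchronized in $t$,'' but that phrase does not yet explain why the right-hand side $\bigl(-\partial_t\ell_i(t_0,x_0)\bigr)_{i\in S_0}$ is annihilated by every left kernel vector of $\bigl(\nabla_x\ell_i(t_0,x_0)\bigr)_{i\in S_0}$. One actually needs the following chain: the constant combinatorial type gives both a constant rank for $\bigl(\nabla_x\ell_i(t,\cdot)\bigr)_{i\in S_0}$ and a smoothly varying point $x(t)\in\bigcap_{i\in S_0}H_i(t)$ with $x(t_0)=x_0$; constant rank lets one extend any relation $\sum_{i\in S_0}c_i\nabla_x\ell_i(t_0,\cdot)=0$ to a smooth family $c_i(t)$; evaluating at $x(t)$ shows $\sum_i c_i(t)\,\ell_i(t,\cdot)\equiv 0$ as affine forms; differentiating in $t$ at $t_0$ and using $\ell_i(t_0,x_0)=0$ yields $\sum_i c_i\,\partial_t\ell_i(t_0,x_0)=0$. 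Without this step the claim ``a smooth local solution $W$ therefore exists'' is unsupported. Second, the partition-of-unity gluing is only valid because the admissible set for $W(t,x)$ is an affine subspace, so convex combinations stay admissible; but for this to give a globally controlled field the cover must be chosen subordinate to the flat stratification (so that a local solution built near a generic point of a single $H_i$ is never averaged against one built near a deeper intersection in a region where the deeper constraints are active). You should say this explicitly rather than invoking a generic partition of unity on all of $\mathbb{C}^d\times(a,b)$.

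Two smaller remarks. For the central case relevant to this paper one can sidestep the Gr\"{o}nwall/completeness issue entirely: the linear forms $\ell_i(t,\cdot)$ are homogeneous, so the admissible $W$ can be chosen linear in $x$, the field is $\mathbb{C}^*$-equivariant, and it suffices to integrate on the compact unit sphere where completeness is automatic. And the conclusion you draw --- that the global flow preserves each $\widetilde H_i$, hence carries $M(\mathcal{A}_{t_1})$ diffeomorphically onto $M(\mathcal{A}_{t_2})$ --- is correct once the field is genuinely smooth and controlled; this is exactly why the isotopy lemma, which in general only yields homeomorphisms of the stratified pair, does give a diffeomorphism on the open dense stratum.
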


A \emph{hyperplane arrangement} is an arrangement of codimension $1$ subspaces. 
Again, a hyperplane arrangement is \emph{central} if each of its subspaces is 
linear. For a central hyperplane arrangement 
$\mathcal{A}=\{H_{1},\ldots,H_{m}\}$ in $\mathbb{C}^{d}$ pick linear 
forms~$\alpha_{i}$ in the dual space~$(\mathbb{C}^{d})^{\ast}$ with 
$H_{i}=\ker\alpha_{i}$. The \emph{underlying matroid} of~$\mathcal{A}$ is by 
definition the matroid~$M_{\mathcal{A}}$ with ground set 
$E_{\mathcal{A}}=\{1,\ldots,m\}$ and 
\begin{equation*}
\mathfrak{I}_{\mathcal{A}}=\left\lbrace
                             S\subseteq E \,
                              \left| \,
                               \left\lbrace
                                  \alpha_{i}
                               \right\rbrace_{i\in S} \,
                                 \text{is linearly independent over }\mathbb{C}
                              \right.
                          \right\rbrace
\end{equation*}
as independent sets. Clearly, the matroid $M_{\mathcal{A}}$ does not depend on 
the choice of the linear forms~$\alpha_{i}$. The \emph{rank} of~$\mathcal{A}$ is 
by definition the rank of~$M_{\mathcal{A}}$ and we say that $\mathcal{A}$ is 
\emph{essential} if its rank is maximal.

Notice that a smooth one-parameter family~$\mathcal{A}_{t}$ of central 
hyperplane arrangements is an isotopy if and only if $M_{\mathcal{A}_{t_{1}}} = 
M_{\mathcal{A}_{t_{2}}}$ for any $t_{1}$ and $t_{2}$. 

\begin{definition}[Milnor fiber and fibration]\label{Milnor}
 Given linear forms $\alpha_{i}\in(\mathbb{C}^{d})^{\ast}$ with 
$H_{i}=\ker\alpha_{i}$, the polynomial $Q_{\mathcal{A}} = 
\prod_{i=1}^{m}\alpha_{i}$ is homogeneous of degree~$m$ and can be considered as 
a map 
\begin{equation*}
 Q_{\mathcal{A}}:M(\mathcal{A})\longrightarrow\mathbb{C}^{\ast}
\end{equation*}
that is the projection of a fiber bundle called the \emph{Milnor fibration} of 
the arrangement (see~\cite{Milnor1968}). The \emph{Milnor fiber} is then the 
fiber $F_{\mathcal{A}}=Q_{\mathcal{A}}^{-1}(1)$. 
\end{definition}

The subsequent theorem proved by Randell in~\cite{Ran97} states that the Milnor 
fiber and fibration are also invariants for isotopic arrangements.

\begin{theorem}[\cite{Ran97}]\label{MFF}
 Let $\mathcal{A}_{t}$ be a smooth one-parameter family of central hyperplane arrangements. If $\mathcal{A}_{t}$ 
 is an isotopy, then for any $t_{1}$ and $t_{2}$ the Milnor fibrations $Q_{\mathcal{A}_{t_{1}}}$ and 
 $Q_{\mathcal{A}_{t_{2}}}$ are isomorphic fiber bundles.
\end{theorem}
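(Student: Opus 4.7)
The plan is to strengthen the proof of Randell's isotopy theorem (Theorem~\ref{RIT}) by arranging the diffeomorphism between complement manifolds to be additionally compatible with the defining polynomials. I would work with the total family $\mathcal{M}=\{(x,t)\in\mathbb{C}^{d}\times(a,b) : x\in M(\mathcal{A}_{t})\}$, together with the projection $\pi\colon\mathcal{M}\to(a,b)$ and the ``family polynomial'' $\mathbf{Q}\colon\mathcal{M}\to\mathbb{C}^{*}$ given by $\mathbf{Q}(x,t)=Q_{\mathcal{A}_{t}}(x)$. Theorem~\ref{MFF} will follow once, for each pair $(t_{1},t_{2})\in(a,b)^{2}$, I produce a diffeomorphism $\varphi\colon M(\mathcal{A}_{t_{1}})\to M(\mathcal{A}_{t_{2}})$ satisfying $Q_{\mathcal{A}_{t_{2}}}\circ\varphi=Q_{\mathcal{A}_{t_{1}}}$, since then the pair $(\varphi,\mathrm{id}_{\mathbb{C}^{*}})$ is an isomorphism of the two Milnor fibrations as fiber bundles over~$\mathbb{C}^{*}$.

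The key step is to construct a smooth vector field $X$ on $\mathcal{M}$ satisfying $d\pi(X)=\partial/\partial t$ and $d\mathbf{Q}(X)=0$; the time-$(t_{2}-t_{1})$ map of its flow will then be the desired~$\varphi$. Locally such a lift is immediate: since $Q_{\mathcal{A}_{t}}$ is homogeneous of degree~$m$, the Euler identity yields $dQ_{\mathcal{A}_{t}}\neq 0$ on $M(\mathcal{A}_{t})$, so the joint map $(\pi,\mathbf{Q})\colon\mathcal{M}\to(a,b)\times\mathbb{C}^{*}$ is a submersion and the vector fields $X$ satisfying the two linear constraints form an affine subbundle of $T\mathcal{M}$ admitting local sections. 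A partition of unity in $t$ glues these into a global~$X$.

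The main obstacle is completeness of the flow of $X$: because $\mathcal{M}$ is open in $\mathbb{C}^{d}\times(a,b)$, integral curves could in principle escape by approaching some hyperplane~$H_{i}(t)$ or by running off to infinity in~$x$. Ruling this out is exactly where the isotopy hypothesis enters, and I would handle it by the stratified argument of~\cite{Ran89}. Specifically, I would use a Whitney stratification of a projective compactification of $\mathbb{C}^{d}\times(a,b)$ in which every flat of the family appears as a stratum; the constancy of the matroid $M_{\mathcal{A}_{t}}$ ensures that each stratum submerses onto $(a,b)$, so Thom's first isotopy lemma provides a controlled tube system along which any lift of $\partial/\partial t$ is complete. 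Performing the partition-of-unity construction of the previous paragraph compatibly with this tube system yields an $X$ whose flow is defined on all of~$(a,b)$, and the condition $d\mathbf{Q}(X)=0$ then forces $Q_{\mathcal{A}_{t_{2}}}\circ\varphi=Q_{\mathcal{A}_{t_{1}}}$, completing the proof.
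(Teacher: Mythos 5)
The paper states Theorem~\ref{MFF} as a citation to Randell~\cite{Ran97} and provides no proof, so there is no in-paper argument to compare yours against; what you have written is rather an outline of Randell's own argument. Broadly your outline captures the right mechanism: produce a diffeomorphism of complements that strictly intertwines the Milnor maps by integrating a vector field lifting $\partial/\partial t$ and annihilating $d\mathbf{Q}$, and use the constancy of the matroid to keep the flow from degenerating. The local part is fine --- the Euler identity does show $(\pi,\mathbf{Q})$ is a submersion on $\mathcal{M}$, and convex combinations via a partition of unity stay in the affine solution bundle.

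The gap is in the completeness step, which you compress into a single sentence. First, Thom's first isotopy lemma does not make \emph{any} lift of $\partial/\partial t$ complete; it produces one \emph{controlled} lift, whose completeness comes from the control conditions $d\rho_S(X)=0$ in the tube system. So what you actually need is a vector field that is simultaneously controlled (to be complete in the compactification) and annihilates $d\mathbf{Q}$ (to intertwine the fibrations), and it is not automatic that these two systems of pointwise linear constraints are jointly solvable near the strata. Near the hyperplane strata this can be arranged because $|Q|$ (or the relevant local factor $\prod\alpha_i$) can itself be taken as the control function, so preservation of $\mathbf{Q}$ is compatible with control; but along the strata at infinity $\mathbf{Q}$ does not extend, and the interpolation region between the $\mathbf{Q}$-constrained part and the infinity tubes has to be handled explicitly. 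As written, ``performing the partition-of-unity construction compatibly with this tube system'' asserts this compatibility without proof, and that is precisely the technical content of~\cite{Ran97}. An alternative route that sidesteps the escape-to-infinity issue entirely --- and is closer in spirit to how such arguments are often run for homogeneous $Q$ --- is to exploit the $\mathbb{C}^{*}$-equivariance of everything in sight and work on the link $S^{2d-1}\cap M(\mathcal{A}_t)$, where the ambient sphere is compact and completeness is automatic; you may find this cleaner if you want to fill in the details.
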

   
\section{Reduced realization spaces}\label{Section2}

Throughout this section we suppose that, given a rank $d$ matroid $M$ with 
ground set $E =\{1,\ldots,m\}$, the set $\{1,\ldots,d\}$ is a basis of $M$. We 
can always assume this after relabelling the elements of the ground set.

Our goal is to introduce a subspace~$\mathcal{R}_{\mathbb{C}}^{R}(M)$ of the 
realization space~$\mathcal{R}_{\mathbb{C}}(M)$ that contains information about 
the realizability of~$M$ over $\mathbb{C}$ 
and the connectedness of~$\mathcal{R}_{\mathbb{C}}(M)$, 
but it is easier to describe than the full space~$\mathcal{R}_{\mathbb{C}}(M)$.

Suppose in fact that $A \in M_{d,m}(\mathbb{C})$ realizes~$M$ over~$\mathbb{C}$, 
namely $A \in \mathcal{R}_{\mathbb{C}}(M)$. Since we assume that $\{1, \ldots, 
d\}$ is a basis for~$M$, we can perform a change of coordinates 
in~$\mathbb{C}^d$ in such a way that the columns $A^1, \ldots, A^d$ of $A$ 
become the standard basis. 
The new matrix we obtain realizes~$M$ over $\mathbb{C}$ as well. At 
this point, we can multiply every row of~$A$ by a non-zero scalar without 
modifying the realizability property. Therefore, for a matrix $A \in 
M_{d,m}(\mathbb{C})$ realizing~$M$ over~$\mathbb{C}$ we can try to find an 
invertible matrix $G\in GL_{d}(\mathbb{C})$ of rank~$d$ and a complex 
non-singular diagonal matrix~$D$ of rank~$m$ so that $GAD$ has as many zeros and 
ones as possible, and still realizes~$M$ over~$\mathbb{C}$. Our new space will 
correspond to the set of these ``reduced'' matrices. 
To be more specific, we would like that the new matrix $GAD$ is of the form 
$\left(I_{d}\left|\tilde{A}\right.\right)$, where $I_{d}$ is the $d\times d$ 
identity matrix and $\tilde{A}\in M_{d,m-d}(\mathbb {C})$ is a matrix of 
$d$ rows and $m-d$ columns with complex coefficients that fulfills the 
following properties:
\begin{itemize}
 \item For each column of $\tilde{A}$, the first non-zero entry 
       (from the top to the bottom) equals $1$;
 \item For each row of $\tilde{A}$, the first non-zero entry 
       (from the left to the right) that is not the first non-zero entry 
       (from the top to the bottom) of a column equals $1$.
\end{itemize}

In order to define precisely and to be able to manipulate the object we are 
going to define, we need a somehow technical notion, the \emph{normal frame} of 
a matrix. This is a way to encode the ``support'' of a particular element of 
the equivalence class of a matrix~$Q$ under the left action 
by~$GL_d(\mathbb{C})$ and the right action by~$\mathbb{C}^{*}$. By ``support'' 
we mean that the entries in the normal frame have value~$1$ for such an element 
in the equivalence class. Let us consider a matrix $Q\in 
M_{n,r}(\mathbb{C})$ of $n$ rows and $r$ columns with complex coefficients and 
let us associate to~$Q$ a board~$S_{0}(Q)$ of $n$ rows and $r$ columns with 
black squares in correspondence to the zero entries of~$Q$ and white squares in 
correspondence to the non-zero entries of~$Q$. At this point, we perform the 
following sequence of operations on the board~$S_{0}(Q)$:
\begin{enumerate}[label=(O\arabic{*})]
 \item \label{O1} For each column of $S_{0}(Q)$ we color blue the first white square 
                  from the top to the bottom. We call this board $S_{1}(Q)$;
 \item \label{O2} For each row of $S_{1}(Q)$ we color red the first white square from 
                  the left to the right. We call this board $S_{2}(Q)$;
 \item \label{O3} We color green each blue or red square of $S_{2}(Q)$. We call this 
                  board $S(Q)$.
\end{enumerate}

\begin{definition}[Normal frame]\label{normalframe}
 The \emph{normal frame} of a matrix $Q\in M_{n,r}(\mathbb{C})$ is
 \begin{equation*}
 \mathcal{P}_{Q} = \bigl\{
                     (i,j)\in\{1,\ldots,n\}\times\{1,\ldots,r\} \, \mid \, 
\text{the 
                     $(i,j)\text{-th square}$ of $S(Q)$ is green}
                   \bigr\}
\end{equation*}
\end{definition}

We are now ready to define the reduced realization space of a matroid. 
 
\begin{definition}[Reduced realization space]\label{reducedrealizationspace}
 For a rank $d$ matroid $M$ with ground set $E=\{1,\ldots,m\}$ and 
 $\{1,\ldots,d\}$ as basis, the \emph{reduced realization space} of $M$ 
over~$\mathbb{C}$ is the set~$\mathcal{R}_{\mathbb{C}}^{R}(M)$ of matrices 
$A\in M_{d,m}(\mathbb{C})$ that satisfy the following 
 conditions:
 \begin{enumerate}[label=(C\arabic{*})]
  \item \label{C1} $A$ realizes $M$ over $\mathbb{C}$, that is, $A$ belongs to 
                   $\mathcal{R}_{\mathbb{C}}(M)$;
  \item \label{C2} $A$ is of the form $\left(I_{d}\left|\tilde{A}\right.\right)$, 
                   where $I_d$ is the $d \times d$ identity matrix;
  \item \label{C3} The entries of $\tilde{A}$ that are in the normal frame 
                   $\mathcal{P}_{\tilde{A}}$ equal $1$. 
 \end{enumerate}
 We endow~$\mathcal{R}_{\mathbb{C}}^{R}(M)$ with the subspace topology 
of~$M_{d,m}(\mathbb{C})$.
\end{definition}

\begin{remark}\label{KeyNote}
 For a matrix $A\in M_{d,m}(\mathbb{C})$, condition \ref{C1} is equivalent to 
 \begin{equation}
 \tag{$\ast$}\label{algebraicset}
  \left\lbrace
  \begin{array}{lll}
   \det\left(A^{j_{1}}|\cdots|A^{j_{d}}\right)\neq0 & \text{if} & 
            \left\lbrace j_{1},\ldots,j_{d}\right\rbrace\in\mathfrak{B} \\
   \det\left(A^{j_{1}}|\cdots|A^{j_{d}}\right)=0    & \text{if} & 
            \left\lbrace j_{1},\ldots,j_{d}\right\rbrace\notin\mathfrak{B} \\
  \end{array}
  \right.
 \end{equation}
 where $A^{j}$ denotes the $j\text{-th}$ column of $A$ and $\mathfrak{B}$ is the set of bases of $M$.
 For $1\leq i\leq d$ and $d+1\leq j\leq m$, 
if we consider the $d\text{-uples}$
\begin{equation*}
 (\{1,\ldots,d\}\setminus\{i\})\cup\{j\}
\end{equation*}
it follows from \eqref{algebraicset} that, given matrices $\tilde{A}_{1}$ and $\tilde{A}_{2}$ in $M_{d,m-d}(\mathbb{C})$ 
with $(I_{d}|\tilde{A}_{1})$ and $(I_{d}|\tilde{A}_{2})$ in $\mathcal{R}_{\mathbb{C}}^{R}(M)$, the board $S_{0}(\tilde{A}_{1})$ 
equals $S_{0}(\tilde{A}_{2})$. Hence, all matrices $\tilde{A}$ in  $M_{d,m-d}(\mathbb{C})$ with $(I_{d}|\tilde{A})$ in 
$\mathcal{R}_{\mathbb{C}}^{R}(M)$ have the same normal frame. 
This, together with condition \ref{C2} and \eqref{algebraicset}, 
implies that $\mathcal{R}_{\mathbb{C}}^{R}(M)$ can be written as a subset of $M_{d,m}(\mathbb{C})$ satisfying a system of 
equalities and inequalities of polynomial type.
\end{remark}

The subsequent proposition will clarify how the 
spaces $\mathcal{R}_{\mathbb{C}}(M)$ and $\mathcal{R}_{\mathbb{C}}^{R}(M)$ are 
related. In particular, it shows that the connectedness 
of $\mathcal{R}_{\mathbb{C}}^{R}(M)$ implies the one 
of $\mathcal{R}_{\mathbb{C}}(M)$. This fact is a direct consequence of the 
connectedness of the complex linear group and of the complex torus.

\begin{proposition}\label{keytool}
 For a rank $d$ matroid $M$ with ground set $E=\{1,\ldots,m\}$ and 
 $\{1,\ldots,d\}$ as basis, let $A\in\mathcal{R}_{\mathbb{C}}(M)$
 be a matrix that realizes~$M$ over~$\mathbb{C}$. 
 Then, there exist an invertible matrix $G\in GL_{d}(\mathbb{C})$ of rank~$d$ 
 and a complex non-singular diagonal 
 matrix $D$ of rank $m$ such that $GAD \in \mathcal{R}_{\mathbb{C}}^{R}(M)$. In 
 particular, the following properties hold:
 \begin{enumerate}[label=(P\arabic{*})]
  \item \label{P1} $\mathcal{R}_{\mathbb{C}}(M)\neq\emptyset$ if and only if 
                   $\mathcal{R}_{\mathbb{C}}^{R}(M)\neq\emptyset$;
  \item \label{P2} If $\mathcal{R}_{\mathbb{C}}^{R}(M)$ is connected, so is 
                   $\mathcal{R}_{\mathbb{C}}(M)$.
 \end{enumerate}
\end{proposition}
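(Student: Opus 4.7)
The plan is to split the argument into two stages. First I construct the reducing matrices $G$ and $D$ explicitly from $A$; then I derive (P1) and (P2) as formal consequences using path-connectedness of $GL_d(\mathbb{C})$ and the complex torus $(\mathbb{C}^{*})^m$.

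For the construction, I exploit the assumption that $\{1,\ldots,d\}$ is a basis of $M$: the submatrix $B=(A^1\mid\cdots\mid A^d)$ is invertible, so $G_0:=B^{-1}$ brings $A$ to the form $(I_d\mid\tilde{A}_0)$ with $\tilde{A}_0\in M_{d,m-d}(\mathbb{C})$. I would then look for invertible diagonal matrices $D_1$ of size $d$ and $D_2$ of size $m-d$, and set $G:=D_1^{-1}G_0$, $D:=\mathrm{diag}(D_1,D_2)$; a direct computation yields $GAD=(I_d\mid D_1^{-1}\tilde{A}_0D_2)$. Writing $\tilde{A}:=D_1^{-1}\tilde{A}_0D_2$, its $(i,j)$-entry equals $(\mu_j/\lambda_i)(\tilde{A}_0)_{i,j}$, where $\lambda_i,\mu_j$ denote the diagonal entries of $D_1,D_2$. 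In particular $\tilde{A}$ has the same support, and therefore the same normal frame, as $\tilde{A}_0$, so the problem reduces to choosing the $\lambda_i$ and $\mu_j$ so that every entry of $\tilde{A}$ indexed by $\mathcal{P}_{\tilde{A}_0}$ equals $1$.

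I plan to fix these scalars inductively on the row index. Let $b_j$ be the row of the blue entry in column $j$, and, for each row $i$ admitting a red entry, let $c_i$ be its column; by definition of the normal frame one has $b_{c_i}<i$. Starting with $\lambda_1=1$ --- row $1$ carries no red entry, since any nonzero entry there is already the blue of its column --- for each $i\geq 2$ I combine the red equation for row $i$ with the blue equation for column $c_i$ to express $\lambda_i$ in terms of $\lambda_{b_{c_i}}$, a quantity already determined. Rows without red entries impose no constraint, and I set $\lambda_i=1$. The blue equations then prescribe $\mu_j=\lambda_{b_j}/(\tilde{A}_0)_{b_j,j}$, producing $GAD\in\mathcal{R}_{\mathbb{C}}^R(M)$; consistency of the various red equations sharing a common $\mu_{c_i}$ should follow automatically from the recursive definition.

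The second stage is then straightforward. Property (P1) follows from the construction together with the inclusion $\mathcal{R}_{\mathbb{C}}^R(M)\subseteq\mathcal{R}_{\mathbb{C}}(M)$. For (P2), since $GL_d(\mathbb{C})$ and $(\mathbb{C}^*)^m$ are path-connected, any $A\in\mathcal{R}_{\mathbb{C}}(M)$ can be connected to its reduced form $GAD\in\mathcal{R}_{\mathbb{C}}^R(M)$ by a path $G(t)AD(t)$ lying entirely in $\mathcal{R}_{\mathbb{C}}(M)$, realizability being preserved under the action of $GL_d(\mathbb{C})\times(\mathbb{C}^*)^m$; concatenating with a path inside $\mathcal{R}_{\mathbb{C}}^R(M)$ furnished by the hypothesis then links any two elements of $\mathcal{R}_{\mathbb{C}}(M)$. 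The main obstacle is the well-posedness of the inductive normalization, which rests on the geometric fact $b_{c_i}<i$: without it, the blue and red constraints could prescribe conflicting values for the scalars, and the recursion would collapse.
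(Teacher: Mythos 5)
Your proof is correct, and its overall architecture matches the paper's: bring $A$ to the form $(I_d \mid \tilde A_0)$ using the basis $\{1,\dots,d\}$, normalize $\tilde A_0$ by diagonal matrices so the normal-frame entries equal $1$, and derive \ref{P1} and \ref{P2} from the path-connectedness of $GL_d(\mathbb{C})$ and $(\mathbb{C}^{*})^{m}$. Where you diverge is in how you produce the diagonal normalizers. The paper isolates this as Lemma~\ref{matrixframe}, proved by induction on $|\mathcal{P}_Q|$ via Lemma~\ref{boardframe}, which finds a row or column of $S(Q)$ containing exactly one green square whose deletion commutes with the coloring operations \ref{O1}--\ref{O3}; the induction then peels off such a line. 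You instead give a direct, closed-form recursion: since a red square in row $i$ is by definition not the topmost nonzero entry of its column, the blue square $b_{c_i}$ of that column lies strictly above row $i$. Processing rows in increasing order, you set $\lambda_i=1$ when row $i$ has no red entry and $\lambda_i = \lambda_{b_{c_i}}(\tilde A_0)_{i,c_i}/(\tilde A_0)_{b_{c_i},c_i}$ otherwise, and finally read off $\mu_j = \lambda_{b_j}/(\tilde A_0)_{b_j,j}$. The inequality $b_{c_i}<i$ you flag as the crux is precisely what makes the constraint system triangular --- each $\lambda_i$ and each $\mu_j$ is pinned by at most one equation, referring only to previously determined scalars --- so no conflicts can arise. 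This is a valid and arguably more transparent alternative to the paper's two-lemma induction. Two points you leave implicit and should state explicitly: (i) $(I_d \mid D_1^{-1}\tilde A_0 D_2)$ still realizes $M$, because the left $GL_d$-action and right scaling of columns by nonzero scalars preserve linear dependence patterns of the columns (you invoke this for \ref{P2}, but it is already needed to conclude $GAD\in\mathcal{R}_{\mathbb{C}}^{R}(M)$); (ii) for \ref{P2} you use a path inside $\mathcal{R}_{\mathbb{C}}^{R}(M)$ while the hypothesis is only connectedness --- the paper bridges this via Remark~\ref{KeyNote}, observing that $\mathcal{R}_{\mathbb{C}}^{R}(M)$ is cut out by polynomial equalities and inequalities, hence its connected components are path components.
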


To show this result we need at first to prove some technical lemmas.

\begin{lemma}\label{boardframe}
 For a matrix $Q\in M_{n,r}(\mathbb{C})$ with at least a non-zero entry,
 consider the board~$S(Q)$ associated to~$Q$. Then, there exists a line 
 (row or column) of~$S(Q)$ that contains 
 exactly one green square and such that the board obtained from~$S(Q)$ by 
 deleting this line coincides with the one obtained from~$S_{0}(Q)$ by 
 deleting such line and then performing the steps
 \ref{O1}, \ref{O2} and \ref{O3}.
\end{lemma}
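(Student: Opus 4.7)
The plan is to characterise when removal of a line is compatible with the board-construction process and to link this characterisation with the count of green squares; a short extremality argument then produces a valid line.

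\emph{Compatibility criterion.} I would first argue that removing a column $j$ from $S_{0}(Q)$ and then redoing \ref{O1}, \ref{O2}, \ref{O3} yields the same result as removing column $j$ from $S(Q)$ if and only if column $j$ contains no red square of $S(Q)$. Indeed, the blue squares (the column-tops) are computed one column at a time and so are unaffected by deletion of a different column, whereas the red square of row $i$ (its leftmost non-blue non-zero) is disturbed precisely when it used to lie in column $j$. Symmetrically, removing row $i$ is compatible if and only if row $i$ contains no blue square, since the topmost non-zero of a column can move only when a previous column-top has been erased.

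\emph{Green-count dictionary and existence.} A non-empty column always has a unique blue square (its topmost non-zero), hence it contains exactly one green square if and only if it carries no red; dually, a non-empty row without any blue has its leftmost non-zero automatically non-blue and therefore red, so such a row contains exactly one green square. Combined with the first step, the lemma reduces to showing that either some non-empty column has no red square or some non-empty row has no blue square. For this, let $i^{*}$ be the largest index of a row of $Q$ containing a non-zero entry; such a row exists because $Q$ has at least one non-zero entry. If row $i^{*}$ contains no blue square, we are done. Otherwise, pick a blue entry $(i^{*}, j)$: by definition it is the topmost non-zero of column $j$, and by maximality of $i^{*}$ no non-zero lies below it in that column. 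Hence $(i^{*}, j)$ is the unique non-zero of column $j$, which therefore carries no red square, and the proof is complete.

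The most delicate point will be the verification in the first step: one has to check carefully that removal of a line does not promote a previously plain (white) non-zero to a new blue or to a new red, which would spoil the identification of the two boards. Once this bookkeeping is pinned down, the dictionary and the extremality argument are short, and the two possibilities in the final step provide the required row or column by inspection.
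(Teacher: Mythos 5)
Your proof is correct and takes essentially the same route as the paper's: both arguments reduce the lemma to locating either a non-empty row with no blue square or a column whose unique non-zero entry is its blue, via an extremality argument on rows. The paper maximizes the row index carrying a blue square (after first discarding all-black lines without loss of generality), whereas you maximize the row index carrying a non-zero entry, which lets you bypass that preliminary reduction, but the two case analyses land on the same two conclusions.
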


\begin{proof}
 Without loss of generality we can assume that each line (row or column) 
 of~$S_{0}(Q)$ contains at least a white square. 
 Otherwise, it suffices to delete that black line and study the problem for a 
 smaller board. Set
 \begin{equation*}
  \nu = \max \left\lbrace
               i\in\left\lbrace
                 1,\ldots,n
                 \right\rbrace
                 \, \mid \,
               \text{the $i\text{-th}$ row of $S_{1}(Q)$ contains a blue square}
             \right\rbrace
 \end{equation*}
 and notice that under the assumption that each line of~$S_{0}(Q)$ 
 contains at least a white square, this number~$\nu$ 
 is well defined. We distinguish two cases:
 \begin{itemize}
  \item If $1\leq\nu<n$, then the statement follows by considering the  
        $(\nu+1)\text{-th}$ row.
  \item If $\nu=n$, then it suffices to consider the first column for which 
        this maximum is attained. \qedhere
 \end{itemize}
\end{proof}

\begin{lemma}\label{matrixframe}
 Given a matrix $Q\in M_{n,r}(\mathbb{C})$ there exist complex non-singular 
 diagonal matrices $D_{1}$ of rank $n$ and $D_{2}$ of rank~$r$ such that the 
 entries of $D_{1}QD_{2}$ that belong to the normal frame $\mathcal{P}_{Q}$ 
 of~$Q$ equal~$1$.
\end{lemma}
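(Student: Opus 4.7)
The plan is to proceed by induction on $n+r$, using Lemma~\ref{boardframe} as the main inductive tool. The base case is when $Q$ has no non-zero entries: then $\mathcal{P}_Q = \emptyset$ and we take $D_1 = I_n$, $D_2 = I_r$.

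For the inductive step, assuming $Q$ has at least one non-zero entry, Lemma~\ref{boardframe} provides a line $\ell$ of $S(Q)$ containing exactly one green square, say at position $(i_0, j_0)$, and such that deleting $\ell$ from $S(Q)$ gives the same board as applying the coloring procedure to the submatrix $Q'$ obtained from $Q$ by deleting the same line. In particular, after a suitable reindexing, $\mathcal{P}_{Q'}$ coincides with $\mathcal{P}_Q \setminus \{(i_0, j_0)\}$.

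Suppose first that $\ell$ is a row, so $Q' \in M_{n-1, r}(\mathbb{C})$. By the inductive hypothesis there exist non-singular diagonal matrices $D_1' \in M_{n-1,n-1}(\mathbb{C})$ and $D_2' \in M_{r,r}(\mathbb{C})$ such that the entries of $D_1' Q' D_2'$ belonging to $\mathcal{P}_{Q'}$ equal $1$. I would define $D_2 := D_2'$ and build $D_1$ from $D_1'$ by inserting a new diagonal entry $\lambda$ in row $i_0$, where
\begin{equation*}
  \lambda \;=\; \frac{1}{Q_{i_0, j_0}\, (D_2')_{j_0, j_0}}.
\end{equation*}
Since $(i_0, j_0)$ is green, $Q_{i_0, j_0} \neq 0$, and $(D_2')_{j_0, j_0} \neq 0$ because $D_2'$ is non-singular, so $\lambda$ is well defined and non-zero. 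Multiplication by a diagonal matrix on the left only rescales individual rows, hence for rows $i \neq i_0$ the matrix $D_1 Q D_2$ agrees with $D_1' Q' D_2'$ after reindexing, so the green positions there still carry the value~$1$; and by construction the $(i_0, j_0)$-entry of $D_1 Q D_2$ equals $1$ as well. The case when $\ell$ is a column is entirely symmetric, swapping the roles of $D_1$ and $D_2$.

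The only subtle point, and the reason we rely on Lemma~\ref{boardframe}, is to ensure that deleting the line $\ell$ does not alter the colouring of the remaining board; otherwise the new diagonal scaling on the untouched rows/columns could conflict with the inductive choice. Once that compatibility is guaranteed, the proof reduces to a sequence of one-variable rescalings, each setting a single green entry to~$1$ without disturbing those already fixed. Hence the main obstacle is conceptual rather than computational: it is the use of Lemma~\ref{boardframe} to guarantee that the normal frame behaves well under deletion of a line, allowing the induction to proceed cleanly.
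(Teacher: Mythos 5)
Your proof is correct and follows essentially the same approach as the paper: both rely on Lemma~\ref{boardframe} to delete a line carrying a single green square, apply the inductive hypothesis to the smaller matrix, and re-insert a diagonal entry $\lambda = \bigl(Q_{i_0,j_0}(D_2')_{j_0,j_0}\bigr)^{-1}$ to fix the last green entry. The only cosmetic difference is that the paper inducts on $|\mathcal{P}_Q|$ rather than on $n+r$, which makes the all-zero case the natural base of the induction rather than a separate terminal case.
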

\begin{proof}
Our proof exploits the same ideas of \cite[Proposition 2.7]{BL73}. We proceed 
by induction on the cardinality of the normal frame~$\mathcal{P}_{Q}$ of~$Q$. If 
$\left|\mathcal{P}_{Q}\right|=0$, then there is nothing to prove, since this condition 
is equivalent to the fact that all entries of~$Q$ are zero. Now, let us assume 
our statement true for all matrices with normal frame of cardinality strictly 
less than $k$ and let us consider a matrix~$Q$ with normal 
frame~$\mathcal{P}_{Q}$ of $k$ elements. From Lemma~\ref{boardframe} we know 
that there exists a line (row or column) of the board~$S(Q)$ that contains 
exactly one green square and such that the board obtained from~$S(Q)$ by 
deleting this line coincides with the one obtained from~$S_{0}(Q)$ and then 
performing the steps \ref{O1}, \ref{O2} and \ref{O3}. Notice that our proof will be 
essentially the same if we suppose that line is a column. Hence, let us assume 
that line is the $i\text{-th}$ row of~$S(Q)$. Let us denote by $(i,j)$ the 
position of the unique green square placed in it. In particular, the 
entry~$q_{ij}$ of~$Q$ is non-zero. Otherwise, by definition of the algorithmic 
steps \ref{O1}, \ref{O2} and \ref{O3} there will be a black square in the position 
$(i,j)$ of the board~$S(Q)$. Let us denote by $\tilde{Q} \in 
M_{n-1,r}(\mathbb{C})$ the matrix obtained from~$Q$ by deleting its 
$i\text{-th}$ row. With the second part of the statement of Lemma 
\ref{boardframe} we can deduce that the normal frame~$\mathcal{P}_{\tilde{Q}}$ 
of~$\tilde{Q}$ has $k-1$ elements. Thus, by inductive hypothesis there exist 
complex non-singular diagonal matrices~$\tilde{D}_{1}$ of rank~$n-1$ and 
$\tilde{D}_{2}$ of rank~$r$ such that the entries of 
$\tilde{D}_{1}\tilde{Q}\tilde{D}_{2}$ that belong to the normal 
frame~$\mathcal{P}_{\tilde{Q}}$ of~$\tilde{Q}$ equal~$1$. 
For $i\in\{1,2\}$, we denote by~$\tilde{D}_i(j)$ the $j$-th 
diagonal element of the matrix~$\tilde{D}_i$.
So finally, if we 
define 
 \begin{equation*}
 D_{1} = \operatorname{diag}\left(\tilde{D}_{1}(1),\cdots,\tilde{D}_{1}(i-1),
\left(\tilde{D}_{2}(j)q_{ij}\right)^{ -1},                           
\tilde{D}_{1}(i),\cdots,\tilde{D}_{1}(n-1) \right)
 \end{equation*}
and set $D_{2}=\tilde{D}_{2}$, one can easily check that all entries 
of $D_{1}QD_{2}$ that belong to the normal  
frame~$\mathcal{P}_{Q}$ of~$Q$ are equal to~$1$.
\end{proof}

\begin{proof}[Proof of Proposition \ref{keytool}]
 Let $A\in\mathcal{R}_{\mathbb{C}}(M)$ be a matrix that realizes $M$ 
 over $\mathbb{C}.$ Since $\{1,\ldots,d\}$ is a basis of~$M$, there exists an 
 invertible matrix $B\in GL_{d}(\mathbb{C})$ of rank~$d$ such that 
 $BA=\left(I_{d}\left|Q\right.\right)$, where $Q\in M_{d,m-d}(\mathbb{C})$. By 
 Lemma \ref{matrixframe} there exist complex non-singular diagonal 
 matrices~$D_{1}$ of rank~$d$ and~$D_{2}$ of rank~$m-d$ such that the entries 
 of $D_{1}QD_{2}$ belonging to the normal frame~$\mathcal{P}_{Q}$ of~$Q$ are
 equal to~$1$. Now, for $i\in\{1,2\}$ denote by~$D_i(j)$ the $j$-th 
 diagonal element of the matrix~$D_i$, and set 
 \begin{equation*}
  D = \operatorname{diag} \left(D_{1}(1)^{-1}, \ldots,D_{1}(d)^{-1}, D_{2}(1), 
\ldots,D_ {2}(m-d) \right)
 \end{equation*}
 and $G=D_{1}B$. With elementary linear algebra arguments it is not hard to see 
 that the matrix $GAD$ realizes the matroid~$M$ over~$\mathbb{C}$ as well. 
 Hence, condition~\ref{C1} in Definition \ref{reducedrealizationspace} is 
 satisfied. By construction the matrix $GAD$ is of the form 
 $\left(I_{d}\left|D_{1}QD_{2}\right.\right)$, so that conditions \ref{C2} 
 and \ref{C3} in Definition \ref{reducedrealizationspace} are fulfilled, too.

\smallskip
 Hence, it remains to check that properties \ref{P1} and \ref{P2} hold.
 \begin{itemize}
  \item Property \ref{P1} follows directly from the first part of our statement 
and the set inclusion 
        $\mathcal{R}_{\mathbb{C}}^{R}(M)\subseteq\mathcal{R}_{\mathbb{C}}(M)$.
  \item To prove that \ref{P2} is satisfied, let us assume 
        $\mathcal{R}_{\mathbb{C}}^{R}(M)$ connected. We show that under this 
        assumption $\mathcal{R}_{\mathbb{C}}(M)$ is actually a path connected 
        space. Since 
        $\mathcal{R}_{\mathbb{C}}^{R}(M)$ can be expressed as a subset 
        of~$M_{d,m}(\mathbb{C})$ satisfying a system of 
        polynomial equalities and inequalities (see Remark \ref{KeyNote}), the connectedness hypothesis 
        of~$\mathcal{R}_{\mathbb{C}}^{R}(M)$ implies that 
        $\mathcal{R}_{\mathbb{C}}^{R}(M)$ is path connected. 
        Let $A$ and $B$ be matrices of~$\mathcal{R}_{\mathbb{C}}(M)$. Using the 
        first part of 
        our statement, let $G_{1}$ and $G_{2}\in GL_{d}(\mathbb{C})$ be invertible 
        matrices of rank~$d$ 
        and let $D_{1}$, $D_{2}$ be complex
        non-singular diagonal matrices of rank~$m$ 
        such that $G_{1}AD_{1}$ and $G_{2}BD_{2}$ belong 
        to~$\mathcal{R}_{\mathbb{C}}^{R}(M)$. Since
        $\mathcal{R}_{\mathbb{C}}^{R}(M)$ is path connected, 
        we can find a continuous path 
        $\gamma \colon \left[0,1\right] \longrightarrow 
        \mathcal{R}_{\mathbb{C}}^{R}(M)$ such that $\gamma(0)$ equals $G_{1}AD_{1}$ and 
        $\gamma(1)$ equals $G_{2}BD_{2}$. 
        Moreover, from the inclusion $\mathcal{R}_{\mathbb{C}}^{R}(M) 
        \subseteq \mathcal{R}_{\mathbb{C}}(M)$ 
        and the fact that both these spaces are endowed with the 
        subspace topology of~$M_{d,m}(\mathbb{C})$, we see that 
        $\gamma$ is indeed a continuous path with values 
        in~$\mathcal{R}_{\mathbb{C}}(M)$.
        The complex linear group~$GL_{d}(\mathbb{C})$ is path connected, since 
        it is the 
        complement of the complex hypersurface
        $\left\lbrace\det(X)=0\right\rbrace$ in $M_{d,d}(\mathbb{C})$.
        Also the space~$D_{m}(\mathbb{C})$ of complex non-singular diagonal 
        matrices of rank~$m$ is path connected, 
        since it can be diffeomorphically identified with the complex 
        torus $\left(\mathbb{C}^{\ast}\right)^{m}$.
        Thus, there exist continuous paths 
        \begin{equation*}
         \sigma_{1},\sigma_{2} \colon \left[0,1\right]\longrightarrow 
         GL_{d}(\mathbb{C})
         \quad \text{and} \quad
         \tau_{1},\tau_{2} \colon \left[0,1\right]\longrightarrow 
         D_{m}(\mathbb{C})
        \end{equation*}
        with 
        \begin{equation*}
         \begin{array}{llll}
          \sigma_{1}(0)=I_{d}, & \sigma_{1}(1)=G_{1}, & \sigma_{2}(0)=I_{d}, & 
          \sigma_{2}(1)=G_{2}, \\
          \tau_{1}(0)=I_{m},   & \tau_{1}(1)=D_{1},   & \tau_{2}(0)=I_{m},   & 
          \tau_{2}(1)=D_{2}.   \\
         \end{array}
        \end{equation*}
        Now, consider $\Gamma_{A}(t) = \sigma_{1}(t)A\tau_{1}(t)$ and 
        $\Gamma_{B}(t)=\sigma_{2}(t)B\tau_{2}(t)$. Again, using elementary 
        linear algebra arguments, we can easily see that for 
        $t\in\left[0,1\right]$ the matrices $\Gamma_{A}(t)$ and 
        $\Gamma_{B}(t)$ belong to $\mathcal{R}_{\mathbb{C}}(M)$. 
        So finally, if we consider the joined path
        \begin{equation*}
         \sigma(t)=\left\lbrace
                   \begin{array}{lll}
                    \Gamma_{A}(3t)   & \text{if} & t\in\left[0,1/3\right]    \\
                    \gamma(3t-1)     & \text{if} & t\in\left[1/3,2/3\right]  \\
                    \Gamma_{B}(3-3t) & \text{if} & t\in\left[2/3,1\right]    \\
                   \end{array}
                   \right.
        \end{equation*}
        we obtain a continuous path $\sigma 
        \colon \left[0,1\right]\longrightarrow\mathcal{R}_{\mathbb{C}}(M)$ with 
        $\sigma(0)=A$ and $\sigma(1)=B$. \qedhere
 \end{itemize}
\end{proof}

\section{Applications}\label{Section3}

The aim of this section is to prove that complex central hyperplane arrangements 
with up to~$7$ hyperplanes and same underlying matroid are isotopic, improving 
the results of~\cite{NY12} and~\cite{Ye13} to any rank. The central idea of our 
proof is to exploit the connectedness of the reduced realization space of the 
underlying matroid of these arrangements to apply Proposition~\ref{keytool}.

\begin{theorem}\label{main}
 Let $\mathcal{A}=\{H_{1},\ldots,H_{m}\}$ and 
 $\mathcal{B}=\{K_{1},\ldots,K_{m}\}$ be central essential hyperplane 
 arrangements in $\mathbb{C}^{d}$ with same underlying matroid. If $m\leq 7$, 
 then $\mathcal{A}$ and $\mathcal{B}$ are isotopic arrangements.
\end{theorem}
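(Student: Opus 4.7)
The plan is to reduce the theorem to a (finite) connectedness question about reduced realization spaces, to be settled by explicit computation, and then to invoke Proposition~\ref{keytool}. Let $M$ denote the common underlying matroid of $\mathcal{A}$ and $\mathcal{B}$. Since both arrangements are essential in $\mathbb{C}^d$, the matroid $M$ has rank $d$; after a simultaneous relabelling of hyperplanes in $\mathcal{A}$ and $\mathcal{B}$ (which has no effect on whether the two are isotopic) I may assume $\{1,\ldots,d\}$ is a basis of $M$. Choosing defining linear forms for each hyperplane produces matrices $A,B \in \mathcal{R}_{\mathbb{C}}(M)$, and a smooth path in $\mathcal{R}_{\mathbb{C}}(M)$ from $A$ to $B$ would automatically be an isotopy, since the underlying matroid is constantly $M$ along such a family.

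By Proposition~\ref{keytool}~\ref{P2}, the connectedness of $\mathcal{R}_{\mathbb{C}}(M)$ is implied by that of $\mathcal{R}_{\mathbb{C}}^{R}(M)$. The core of the argument is thus to establish the following claim: \emph{for every rank $d$ matroid $M$ on $\{1,\ldots,m\}$ with $m \leq 7$ and with $\{1,\ldots,d\}$ as a basis, $\mathcal{R}_{\mathbb{C}}^{R}(M)$ is either empty or connected.} I would proceed by enumerating all isomorphism classes of matroids with at most $7$ elements (these are finite in number and tabulated in standard databases), and for each one analysing $\mathcal{R}_{\mathbb{C}}^{R}(M)$ as the semialgebraic set described in Remark~\ref{KeyNote}: the subset of $M_{d,m}(\mathbb{C})$ obtained by imposing the identity block $I_d$, the value $1$ at every normal-frame position, the vanishing of all $d \times d$ minors indexed by non-bases, and the non-vanishing of those indexed by bases.

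The normal-frame reduction drastically limits the number of free parameters, so most cases collapse: either the defining equations are inconsistent (in which case $M$ is non-realizable and $\mathcal{R}_{\mathbb{C}}^{R}(M) = \emptyset$) or they determine a Zariski-open subset of an affine space, which is automatically connected. The genuinely delicate matroids are those for which the vanishing of some non-basis minor imposes a non-trivial polynomial relation among the free entries of $\tilde{A}$; for each such case I would parametrise the free entries, solve the relations symbolically, and verify by hand or by computer algebra that the resulting semialgebraic set is connected. This case-by-case verification, rather than a single conceptual argument, is the main obstacle: the number of matroids grows quickly with $m$ and each non-trivial case requires a tailored algebraic-geometric treatment, which is why the computations are collected in Appendix~\ref{AppendixA}. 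The bound $m \leq 7$ is essential, since the complexity explodes beyond it, and indeed Rybnikov's construction shows the conclusion of the theorem itself fails for larger $m$.

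Once the claim is proved, Proposition~\ref{keytool}~\ref{P2} delivers the connectedness of $\mathcal{R}_{\mathbb{C}}(M)$. Since this space is semialgebraic, connected components are path-connected, so a continuous path from $A$ to $B$ exists inside $\mathcal{R}_{\mathbb{C}}(M)$. A final step is to perturb this path to a smooth one --- using that $\mathcal{R}_{\mathbb{C}}(M)$ is a Zariski-open subset of a complex algebraic variety, so smooth paths are dense among continuous paths within each connected component --- producing the desired smooth one-parameter family of matrices realising $M$ throughout, and hence the isotopy between $\mathcal{A}$ and $\mathcal{B}$.
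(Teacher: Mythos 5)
Your proposal follows the paper's approach in broad strokes for the main case $d\leq m$, but it omits entirely the degenerate case $m<d$, where your very first reduction is incorrect: you assert that essentiality of the arrangements in $\mathbb{C}^d$ forces the underlying matroid $M$ to have rank $d$ and hence to admit a $d$-element basis, but a matroid on a ground set of size $m$ has rank at most $m$, so when $m<d$ there is no $d$-element subset of $E$ at all. In that regime the reduced-realization-space machinery --- and Proposition~\ref{keytool}, which presupposes $\{1,\ldots,d\}\subseteq E$ is a basis, hence $d\leq m$ --- simply does not apply. For $m<d$ the underlying matroid is the free matroid on $m$ elements, and the paper handles this separately by showing directly that the realization space $\mathcal{S}_{d,m}(\mathbb{C})$ of full-column-rank $d\times m$ matrices is connected: it is a finite union of hypersurface complements (one for each choice of $m$ rows), and since the product of the corresponding maximal minors is a nonzero polynomial, this union is connected. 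You would need to add an argument of this kind to close the gap.

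Two smaller remarks. Your final smoothing step is vaguer than the paper's: $\mathcal{R}_{\mathbb{C}}(M)$ is locally closed (defined by both polynomial equalities and inequalities), not a Zariski-open subset of a variety, and density of smooth paths is not immediate. The paper's Lemma~\ref{path} instead observes that connected components of such sets are piecewise-linearly path connected, then reparametrizes by flat (infinite-order) stops at the corners to produce a genuinely smooth path on an open interval $(-\epsilon,1+\epsilon)$, which is what the definition of a smooth one-parameter family actually requires. Finally, your claim that the bound $m\leq 7$ is "essential" because Rybnikov's construction shows failure for larger $m$ is a bit misleading: Rybnikov's arrangement has $13$ hyperplanes, so the statement does eventually fail, but the paper presents $m\leq 7$ as a computational limitation and does not claim it is a sharp threshold.
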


This result implies that the diffeomorphism type of the complement manifold and 
the Milnor fiber and fibration of these arrangements are uniquely determined by 
their underlying matroid. 

\begin{corollary}
 Let $\mathcal{A}=\{H_{1},\ldots,H_{m}\}$ and  
 $\mathcal{B}=\{K_{1},\ldots,K_{m}\}$ be central essential hyperplane 
 arrangements in $\mathbb{C}^{d}$ with same underlying matroid. If $m\leq 7$, 
 then the following properties are fulfilled:
 \begin{enumerate}
  \item The complement manifolds $M(\mathcal{A})$ and $M(\mathcal{B})$ are 
diffeomorphic;
  \item The Milnor fibrations $Q_{\mathcal{A}}$ and $Q_{\mathcal{B}}$ are 
isomorphic fiber bundles. 
 \end{enumerate}
\end{corollary}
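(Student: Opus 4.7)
The plan is to reduce the isotopy question, via the framework of realization spaces, to a finite combinatorial-algebraic check. Concretely, two central essential arrangements $\mathcal{A}$ and $\mathcal{B}$ in $\mathbb{C}^{d}$ with the same underlying matroid $M$ correspond to matrices $A, B \in \mathcal{R}_{\mathbb{C}}(M)$ (obtained from linear forms defining the hyperplanes); the remark after Definition~\ref{isotopic} guarantees that a smooth one-parameter family of central hyperplane arrangements is an isotopy precisely when its underlying matroid is constant. Thus it suffices to produce a smooth path in $\mathcal{R}_{\mathbb{C}}(M)$ connecting $A$ to $B$. After relabeling the ground set so that $\{1,\ldots,d\}$ is a basis (possible since the arrangements are essential and $M$ has rank $d$), Proposition~\ref{keytool}\ref{P2} reduces the problem to showing that $\mathcal{R}_{\mathbb{C}}^{R}(M)$ is connected for every realizable matroid $M$ with $|E|\leq 7$.

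The core of the argument would therefore be a case analysis over the (finite) list of isomorphism classes of matroids on at most seven elements. For each such matroid $M$ of rank $d$, I would write $\mathcal{R}_{\mathbb{C}}^{R}(M)$ explicitly using Remark~\ref{KeyNote}: it is the locally closed subvariety of $M_{d,m}(\mathbb{C})$ cut out by (i) the identity block in the first $d$ columns, (ii) the normal-frame entries of the remaining $d\times(m-d)$ block set equal to $1$, (iii) vanishing of the maximal minors indexed by non-bases, and (iv) non-vanishing of the maximal minors indexed by bases. Being a quasi-affine complex algebraic variety, irreducibility will force connectedness in the analytic topology. For many matroids the relations (iii) can be solved to parametrize $\mathcal{R}_{\mathbb{C}}^{R}(M)$ as a Zariski-open subset of an affine space, and connectedness is immediate. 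For the remaining cases, in which the dependencies impose genuine polynomial relations among the free entries, I would run a symbolic computation (Gr\"obner basis followed by primary decomposition, or inspection of a rational parametrization) to either certify irreducibility or conclude that the variety is empty, i.e., that $M$ is non-realizable.

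The virtue of working with $\mathcal{R}_{\mathbb{C}}^{R}(M)$ rather than $\mathcal{R}_{\mathbb{C}}(M)$ is that its dimension is drastically smaller: each row and column has been normalized once, so the polynomial systems involved live in rings with very few variables, which is essential for symbolic computation to terminate. With $m\leq 7$, every individual system is well within range of a computer algebra system, and one expects (as the authors announce) that every reduced realization space that is non-empty turns out to be connected.

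The main obstacle will not be any single hard computation nor a new topological idea, but rather the combinatorial bookkeeping: producing a complete and non-redundant list of isomorphism classes of matroids on up to seven elements, correctly separating the realizable from the non-realizable ones (which is itself a non-trivial question in general, compare e.g.\ the MacLane matroid, whose complex realization space has two components, although it lives on eight elements and so lies outside our range), and carrying out and recording the connectedness verification for every remaining case. Once all these cases are settled, combining them with Proposition~\ref{keytool} and Theorem~\ref{RIT} yields the isotopy conclusion of Theorem~\ref{main}, and Theorem~\ref{MFF} then gives the corollary for complement manifolds and for Milnor fibrations.
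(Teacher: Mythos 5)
Your plan is essentially the one the paper follows to prove Theorem~\ref{main}, from which the Corollary follows immediately via Theorems~\ref{RIT} and~\ref{MFF}; the key reduction (Proposition~\ref{keytool}), the passage from connectedness to a smooth path (Lemma~\ref{path}), and the computer-assisted irreducibility check (Lemma~\ref{small}) are all as you describe. Two points deserve comment. First, the paper's proof of Theorem~\ref{main} splits into the cases $1\le d\le m$ and $1\le m<d$; in the second case one cannot take $\{1,\dots,d\}$ to be a basis of $M$ (since $d>m$), so the reduced-realization-space machinery does not apply, and instead one argues directly that the set $\mathcal{S}_{d,m}(\mathbb{C})$ of rank-$m$ matrices in $M_{d,m}(\mathbb{C})$ is path connected (a nonempty union of hypersurface complements with nonempty intersection). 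Your sketch tacitly assumes $d\le m$ throughout, so this case is missing; under the paper's reading of ``essential'' (rank $= \min(d,m)$, witnessed by the explicit case split) it is not vacuous. Second, your closing sentence inverts the logic slightly: Theorem~\ref{RIT} is not used to establish the isotopy of Theorem~\ref{main} --- that comes from the path construction via Lemma~\ref{path} --- rather, Theorem~\ref{RIT} upgrades the isotopy to a diffeomorphism of complements (item (1)), and Theorem~\ref{MFF} gives item (2). As a minor computational aside, the paper does not run a general Gr\"obner/primary-decomposition pipeline: it observes by inspection that each algebraic set $\hat{X}_M$ encountered is a linear variety, a rational hypersurface, a quadric of rank $>2$, or a cone over such, and is therefore irreducible by elementary arguments.
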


\begin{proof}
 $(1)$ follows from Theorem~\ref{RIT} and $(2)$ is a consequence of 
Theorem~\ref{MFF}. 
\end{proof}

To prove Theorem~\ref{main} some preliminary results are required.

\begin{lemma}\label{path}
 For a rank~$d$ matroid~$M$ with ground set $E=\{1,\ldots,m\}$ let $A$ and $B$ 
 be two matrices that  realize~$M$ 
 over $\mathbb{C}$ and belong to the same connected component 
 of $\mathcal{R}_{\mathbb{C}}(M)$. Then, there exists 
 $\epsilon>0$ and a smooth path 
 $\sigma \colon (-\epsilon,1+\epsilon)\longrightarrow M_{d,m}(\mathbb{C})$ such 
 that $\sigma(0)=A$, 
 $\sigma(1)=B$ and $\sigma(t)\in\mathcal{R}_{\mathbb{C}}(M)$ for 
$t\in(-\epsilon,1+\epsilon)$.
\end{lemma}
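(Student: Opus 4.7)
The plan is to exploit the semi-algebraic structure of $\mathcal{R}_{\mathbb{C}}(M)$ already observed in Remark~\ref{KeyNote}. Under the identification $M_{d,m}(\mathbb{C})\cong\mathbb{R}^{2dm}$, the defining conditions \eqref{algebraicset} translate into a finite system of real-polynomial equalities and inequalities in the real and imaginary parts of the entries, so $\mathcal{R}_{\mathbb{C}}(M)$ is a real semi-algebraic subset of $\mathbb{R}^{2dm}$. Both $A$ and $B$ lie in the same connected component, which is itself a semi-algebraic set.

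I would then invoke the classical result from real algebraic geometry that connected components of semi-algebraic sets are path connected by \emph{Nash} paths, i.e.\ real-analytic semi-algebraic maps from $[0,1]$ (see, e.g., the standard treatment in Bochnak--Coste--Roy). Applying this to the component containing $A$ and $B$ produces a path $\gamma\colon[0,1]\to\mathcal{R}_{\mathbb{C}}(M)$ with $\gamma(0)=A$, $\gamma(1)=B$, which, viewed as a map to the ambient space $M_{d,m}(\mathbb{C})$, is $C^{\infty}$ (in fact real-analytic). If one wishes to avoid citing the Nash path theorem, a slightly longer alternative is to first obtain a continuous path by local path-connectedness of complex analytic sets, and then perturb it into the smooth locus of each irreducible component of the Zariski closure of $\mathcal{R}_{\mathbb{C}}(M)$, using that this smooth locus is Euclidean-open and dense and that on it $\mathcal{R}_{\mathbb{C}}(M)$ is an honest complex manifold.

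To upgrade the path defined on $[0,1]$ to one defined on an open interval $(-\epsilon,1+\epsilon)$, I would use a standard bump-function reparametrisation. Choose a smooth function $\phi\colon\mathbb{R}\to[0,1]$ with $\phi(t)=0$ for $t\leq 0$, $\phi(t)=1$ for $t\geq 1$, strictly increasing on $(0,1)$, and with $\phi^{(k)}(0)=\phi^{(k)}(1)=0$ for every $k\geq 1$. Define
\begin{equation*}
\sigma(t)=
\begin{cases}
A & \text{if } t\leq 0,\\
\gamma(\phi(t)) & \text{if } 0\leq t\leq 1,\\
B & \text{if } t\geq 1.
\end{cases}
\end{equation*}
The vanishing of all derivatives of $\phi$ at $0$ and $1$ ensures that $\sigma$ is $C^{\infty}$ on all of $\mathbb{R}$, and by construction $\sigma(0)=A$, $\sigma(1)=B$, and $\sigma(t)\in\mathcal{R}_{\mathbb{C}}(M)$ for every $t\in\mathbb{R}$. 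Restricting to any open interval $(-\epsilon,1+\epsilon)$ yields the path required by the statement.

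The step that carries the real content is the second one: producing a \emph{smooth} path inside the possibly singular semi-algebraic set $\mathcal{R}_{\mathbb{C}}(M)$, rather than merely a continuous one. A straightforward smoothing-by-convolution argument is unavailable because $\mathcal{R}_{\mathbb{C}}(M)$ is not open in the ambient space, so one really has to appeal to the semi-algebraic / Nash path machinery (or to the stratification of $\mathcal{R}_{\mathbb{C}}(M)$ into smooth manifolds). The reparametrisation-and-extension trick in the last paragraph is routine.
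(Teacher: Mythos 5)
Your overall strategy — use the semi-algebraic structure of $\mathcal{R}_{\mathbb{C}}(M)$ to produce a path and then reparametrise — is the same one the paper uses, but the central claim on which your argument rests is not a standard theorem and is in fact false for general semi-algebraic sets. The statement you invoke, ``connected components of semi-algebraic sets are path connected by Nash paths, i.e.\ real-analytic semi-algebraic maps from $[0,1]$'', does not hold: take $S = \{(x,|x|) : x \in [-1,1]\} \subset \mathbb{R}^2$, which is connected and semi-algebraic. Any real-analytic $\gamma=(a,b)\colon[0,1]\to S$ joining $(-1,1)$ to $(1,1)$ would have $a$ changing sign at some $t_0$ where, by analyticity, $a$ vanishes to odd order; then $b=|a|$ cannot be analytic at $t_0$, a contradiction. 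What \emph{is} true is that connected semi-algebraic sets are semi-algebraically arc-connected, and a continuous semi-algebraic path is analytic away from finitely many parameter values; so one gets a \emph{piecewise} Nash path, possibly with corners. (Your alternative suggestion — push a continuous path into the smooth locus of the Zariski closure — also does not immediately work, since $A$ or $B$ may themselves be singular points, and perturbing the interior of a path through a singular stratum while anchoring its endpoints is not automatic.)

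This matters for the last paragraph of your proof. Your bump function $\phi$ flattens the path to infinite order only at $t=0$ and $t=1$; it does nothing at interior parameter values. So if $\gamma$ has corners in $(0,1)$, the composite $\gamma\circ\phi$ still has corners and is not $C^\infty$. The paper handles exactly this point: it takes a piecewise path and reparametrises so as to ``stop to infinite order'' at \emph{each} break point (using pieces $t\mapsto e^{-1/t^2}$), which is what makes the resulting path smooth through the corners. To repair your argument you should replace the Nash-path citation by the standard semi-algebraic arc-connectedness result, note that the resulting path is piecewise $C^\infty$ with finitely many corners, and then apply your $e^{-1/t^2}$-type reparametrisation not only at the two endpoints but at every interior corner of the path. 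With that modification your proof becomes essentially the paper's proof; as written there is a genuine gap.
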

\begin{proof}
 Let $\mathfrak{B}$ be the set of bases of $M$ and write
 \begin{equation*}
  \mathcal{R}_{\mathbb{C}}(M)=\left\lbrace
                              A\in M_{d,m}(\mathbb{C}) \,
                                \left|
                                 \begin{array}{lll}
                                  
\det\left(A^{j_{1}}|\cdots|A^{j_{d}}\right)\neq0 & \text{if} 
                                                  & \left\lbrace 
j_{1},\ldots,j_{d}\right\rbrace\in\mathfrak{B}    \\
                                  \det\left(A^{j_{1}}|\cdots|A^{j_{d}}\right)=0  
  & \text{if} 
                                                  & \left\lbrace 
j_{1},\ldots,j_{d}\right\rbrace\notin\mathfrak{B} \\                             
                             
                                 \end{array}
                                \right.
                              \right\rbrace
 \end{equation*}
 where $A^{j}$ denotes the $j\text{-th}$ column of~$A$.
 Hence, the space $\mathcal{R}_{\mathbb{C}}(M)$ can be expressed 
 as a subset of $M_{d,m}(\mathbb{C})$ satisfying a system of 
 equalities and inequalities of polynomial type. 
 As a consequence of this, each connected component~$\mathcal{C}$ 
of~$\mathcal{R}_{\mathbb{C}}(M)$ is actually a piecewise linear path 
connected space. Thus, the following property is fulfilled:
 \begin{center}
  If $A, B\in\mathcal{C}$, then there exists $\epsilon>0$ and a piecewise 
  linear path $\gamma \colon (-\epsilon,1+\epsilon) \longrightarrow 
  \mathcal{C}$ with $\gamma(0)=A$ and $\gamma(1)=B$. 
 \end{center}
Since the equalities and inequalities that define~$\mathcal{R}_{\mathbb{C}}(M)$ 
are of polynomial type, it is not hard to see that it is possible to
reparametrize the path~$\gamma$ by stopping of infinite order at each point 
where it it is not smooth
(using pieces $t\mapsto e^{-\frac{1}{t^{2}}}$)
in order 
to find a smooth path $\sigma\colon(-\epsilon,1+\epsilon)\longrightarrow M_{d,m}(\mathbb{C})$
such that $\sigma(0)=A$, $\sigma(1)=B$ and $\sigma(t)\in\mathcal{C}$ for 
$t \in (-\epsilon,1+\epsilon)$.
\end{proof}

\begin{lemma}\label{small}
 For a rank $d$ matroid~$M$ with ground set $E=\{1,\ldots,m\}$ and 
 $\{1,\ldots,d\}$ as basis, if $1\leq d\leq m\leq 7$ and 
 $M$ is realizable over~$\mathbb{C}$, then the
 space~$\mathcal{R}_{\mathbb{C}}^{R}(M)$ is non-empty and connected.
\end{lemma}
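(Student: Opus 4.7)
The plan is as follows. Non-emptiness is immediate: since $M$ is realizable, $\mathcal{R}_{\mathbb{C}}(M) \neq \emptyset$, and Proposition~\ref{keytool}(P1) gives $\mathcal{R}_{\mathbb{C}}^{R}(M) \neq \emptyset$. The substance of the lemma is therefore the connectedness claim, and the conceptual engine is the following ``irreducibility implies connectedness'' principle applied in light of Remark~\ref{KeyNote}. After fixing the identity block and setting the normal-frame entries of $\tilde{A}$ to $1$, the space $\mathcal{R}_{\mathbb{C}}^{R}(M)$ is identified with the complement, inside the complex affine variety $V(M) \subset \mathbb{C}^{N}$ cut out by the determinantal equations $\det(A^{j_{1}} | \cdots | A^{j_{d}}) = 0$ for $\{ j_{1}, \ldots, j_{d}\} \notin \mathfrak{B}$, of the finitely many hypersurfaces $\det(A^{j_{1}} | \cdots | A^{j_{d}}) = 0$ with $\{ j_{1}, \ldots, j_{d} \} \in \mathfrak{B}$. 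If the irreducible components of $V(M)$ that meet the basis-non-vanishing locus coincide in a single component, then $\mathcal{R}_{\mathbb{C}}^{R}(M)$ is a non-empty Zariski-open subset of an irreducible complex variety, and hence is connected in the classical topology.

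The execution would be a finite case analysis. The edge cases $d \in \{1, m-1, m\}$ are easy, as the reduced realization space is either a single point or an open subset of an affine space. For the remaining ranks I would iterate over the (finite) list of isomorphism classes of realizable simple matroids on at most~$7$ elements, taken from standard matroid catalogues, pick for each representative a basis $\{1, \ldots, d\}$, and compute the defining ideal of $V(M)$ using a computer algebra system. In the majority of cases a triangular solution of the determinantal relations produces an explicit rational parameterization of $V(M)$ in the free coordinates, which settles irreducibility at once; matroid duality $M \leftrightarrow M^{\ast}$ can further be invoked to cut the number of genuinely distinct cases roughly in half.

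The hard part will be the handful of \emph{exceptional} matroids whose ideal admits no obvious parameterization. For these I would compute a primary decomposition of $V(M)$ and verify that every component other than the expected one lies inside the vanishing locus of at least one basis-determinant, and therefore contributes nothing to $\mathcal{R}_{\mathbb{C}}^{R}(M)$. The bound $m \leq 7$ is essential to this strategy: starting around $m = 8$, phenomena related to Mn\"ev-type universality and Rybnikov's examples produce matroids whose realization spaces genuinely disconnect, so no uniform structural argument is available and the proof must rely on exhausting the finite but non-trivial list of combinatorial types. The actual Gr\"obner basis verifications would naturally be relegated to the appendix.
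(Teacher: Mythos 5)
Your proposal follows essentially the same route as the paper: reduce connectedness of $\mathcal{R}_{\mathbb{C}}^{R}(M)$ to Zariski irreducibility via the principle that a non-empty Zariski-open subset of an irreducible complex variety is classically connected, dispose of the edge cases $d\in\{1,m-1,m\}$ by hand, and verify irreducibility for the remaining finitely many matroid types by symbolic computation. The paper cites this precisely as \cite[Chapter~7, Theorem~7.1]{Sha13} and checks that the Zariski closure $\hat{X}_M$, cut out by the ``$=0$'' determinantal equations alone, is irreducible (it always turns out to be a linear variety, a rational hypersurface, a quadric of rank $>2$, or a cone over one of these), so your fallback via primary decomposition is a harmless generalization that the paper's data show is never needed for $m\leq 7$.

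Two small points. First, your criterion --- ``only one component of $V(M)$ meets the non-vanishing locus'' --- is weaker (hence more flexible) than the paper's ``$\hat{X}_M$ is irreducible''; both are correct, and the paper happens to get the stronger statement for free. Second, you restrict the case analysis to \emph{simple} matroids, but the lemma is stated for all realizable matroids and the paper's algorithm iterates over the full catalogue of~\cite{Matsumoto2011}. This is not a serious gap, but it deserves a line: in $\mathcal{R}_{\mathbb{C}}^{R}(M)$ a loop forces a zero column and a pair of parallel elements forces a repeated column (after normalization by the normal frame), so $\mathcal{R}_{\mathbb{C}}^{R}(M)$ is naturally homeomorphic to the reduced realization space of the simplification of $M$, and the simple case does suffice --- but one has to say so. The duality remark is a plausible speedup; the paper does not use it and it is not needed for correctness.
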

\begin{proof}
 Since by hypothesis $M$ is realizable over $\mathbb{C}$, we have that 
$\mathcal{R}_{\mathbb{C}}(M)$ is non-empty, and so by Proposition~\ref{keytool} 
we get $\mathcal{R}_{\mathbb{C}}^{R}(M) \neq \emptyset$. 
The space $\mathcal{R}_{\mathbb{C}}^{R}(M)$ can be expressed as a subset of $M_{d,m}(\mathbb{C})$ 
satisfying a system of polynomial equalities and inequalities (see Remark \ref{KeyNote}). 
From \cite[Chapter~7, Theorem~7.1]{Sha13} 
to prove connectedness it is enough to show that~$\mathcal{R}_{\mathbb{C}}^{R}(M)$ is 
irreducible in the Zariski topology. We checked this for all matroids $M$ 
satisfying the hypothesis by a direct computation with the aid of the computer 
algebra system Sage~\cite{sage} (for further details, see 
Appendix~\ref{AppendixA}). 
\end{proof}

\begin{proof}[Proof of Theorem \ref{main}]
 It is clear that the only interesting case is when $d\geq1$ and $m\geq1$.
 To prove our statement we have to distinguish between the two cases 
 $1\leq d\leq m$ and $1\leq m<d$.
 
 \smallskip
 \noindent \emph{Case $1\leq d\leq m$.} Let $M$ be the underlying matroid 
 of the arrangements~$\mathcal{A}$ and~$\mathcal{B}.$ Up to relabelling the 
 hyperplanes~$\{ H_i \}$ of~$\mathcal{A}$ and~$\{ K_i \}$ of~$\mathcal{B}$, 
 let us suppose that $\{1,\ldots,d\}$ is a basis of $M$. 
 Notice that we can always do this, 
 since $\mathcal{A}$ and $\mathcal{B}$ are essential arrangements.
 Pick linear forms~$\alpha_{i}$ and~$\beta_{i}$ such that 
 $H_{i}=\ker\alpha_{i}$ and $K_{i}=\ker\beta_{i}$. Let us denote 
 by $\alpha_{i}^{j}$ and $\beta_{i}^{j}$ the $j\text{-th}$ component 
 of $\alpha_{i}$ and $\beta_{i},$ respectively. Set $A = (\alpha_{i}^{j})^{t}$ 
 and $B = (\beta_{i}^{j})^{t}.$ Now, consider the 
 space $\mathcal{R}_{\mathbb{C}}(M).$ The matrices~$A$ and~$B$ belong to 
 $\mathcal{R}_{\mathbb{C}}(M).$ Hence, to prove that $\mathcal{A}$ and 
 $\mathcal{B}$ are isotopic arrangements (compare Definition~\ref{isotopic}) it 
 is enough to show that there exists $\epsilon>0$ and a smooth path $\sigma 
 \colon (-\epsilon,1+\epsilon) \longrightarrow M_{d,m}(\mathbb{C})$ with 
 $\sigma(0)=A,$ $\sigma(1)=B$ and $\sigma(t)\in\mathcal{R}_{\mathbb{C}}(M)$ for 
 $t$ in $(-\epsilon,1+\epsilon)$. Thus, with Lemma \ref{path} it suffices to check 
 that $\mathcal{R}_{\mathbb{C}}(M)$ is connected. To see this, thanks to 
 Proposition~\ref{keytool}, we can just verify the connectedness 
 of~$\mathcal{R}_{\mathbb{C}}^{R}(M).$ So that, the statement follows from  
 Lemma~\ref{small}.
 
 \smallskip
 \noindent \emph{Case $1\leq m<d$.} This follows from elementary complex 
 linear algebra arguments. As previously done, for the hyperplanes $\{H_{i}\}$ of $\mathcal{A}$ 
 and $\{K_{i}\}$ of $\mathcal{B}$ choose linear forms 
 $\alpha_{i}$ and $\beta_{i}$ with 
 $H_{i}=\ker\alpha_{i}$ and $K_{i}=\ker\beta_{i}$. Let us denote 
 by $\alpha_{i}^{j}$ and $\beta_{i}^{j}$ the $j\text{-th}$ component 
 of $\alpha_{i}$ and $\beta_{i},$ respectively. Set $A = (\alpha_{i}^{j})^{t}$ 
 and $B = (\beta_{i}^{j})^{t}$. Now, consider the space
 \begin{equation*}
  \mathcal{S}_{d,m}(\mathbb{C})=\left\lbrace
                                 Q\in M_{d,m}(\mathbb{C})\mid\operatorname{rk}Q=m
                                \right\rbrace
 \end{equation*}
 and notice that $A$ and $B$ belong to $\mathcal{S}_{d,m}(\mathbb{C})$ since the 
 arrangements~$\mathcal{A}$ and~$\mathcal{B}$ are essential. Again, to show 
 that $\mathcal{A}$ and $\mathcal{B}$ are isotopic arrangements it 
 suffices to prove that there is $\epsilon>0$ and a smooth path 
 $\sigma\colon(-\epsilon,1+\epsilon)\longrightarrow M_{d,m}(\mathbb{C})$ with 
 $\sigma(0)=A$, $\sigma(1)=B$ and $\sigma(t)\in\mathcal{S}_{d,m}(\mathbb{C})$ 
 for~$t$ in~$(-\epsilon,1+\epsilon)$. With the same arguments of 
 Lemma~\ref{path}, it is enough to verify the connectedness 
 of~$\mathcal{S}_{d,m}(\mathbb{C})$.
 To prove this, let us write
 \begin{equation*}
  \mathcal{S}_{d,m}(\mathbb{C})=\bigcup_{1\leq i_{1}<\cdots<i_{m}\leq d}
  \mathcal{S}_{d,m}^{i_{1}\cdots i_{m}}(\mathbb{C})
 \end{equation*}
 where 
 \begin{equation*}
  \mathcal{S}_{d,m}^{i_{1}\cdots i_{m}}(\mathbb{C})=\left\lbrace
                                                    Q\in M_{d,m}(\mathbb{C})
                                                    \mid
                                                    \det(Q_{i_{1}}^{t}|\cdots|Q_{i_{m}}^{t})\neq0
                                                   \right\rbrace
 \end{equation*}
 and $Q_{i}$ stands for the $i\text{-th}$ row of~$Q$.
 Each space $\mathcal{S}_{d,m}^{i_{1}\cdots i_{m}}(\mathbb{C})$ is path connected, since it is the 
 complement of the complex hypersurface 
 $\{\det(Q_{i_{1}}^{t}|\cdots|Q_{i_{m}}^{t})=0\}$ in~$M_{d,m}(\mathbb{C})$.
 Hence, to conclude our proof it is sufficient to show that 
 \begin{equation*}
  \bigcap_{1\leq i_{1}<\cdots<i_{m}\leq d}\mathcal{S}_{d,m}^{i_{1}\cdots i_{m}}(\mathbb{C})\neq\emptyset
 \end{equation*}
 and this is equivalent to say that 
 \begin{equation*}
  \prod_{1\leq i_{1}<\cdots<i_{m}\leq d}\det(Q_{i_{1}}^{t}|\cdots|Q_{i_{m}}^{t})
 \end{equation*}
 is not the zero polynomial. 
 None of the factors $\det(Q_{i_{1}}^{t}|\cdots|Q_{i_{m}}^{t})$ is the zero polynomial. Thus, the statement 
 follows from the fact that the ring of polynomials in~$dm$ variables with 
 complex coefficients is an integral domain.
\end{proof}

\begin{remark}
 Notice that in the case $1 \leq m<d$ we never used the fact that $m \leq 7$, 
and so in this situation the result of Theorem~\ref{main} holds without any 
numerical restriction.
\end{remark}

\appendix
 
\section{Checking connectedness of reduced realization spaces}\label{AppendixA}

We are going to show by a direct test that Lemma~\ref{small} holds. 
For a rank~$d$ matroid~$M$ with ground set $E=\{1,\ldots,m\}$ and 
$\{1,\ldots,d\}$ as basis, let us consider a matrix $G_{0,M}\in M_{d,m}(\mathbb{C})$ 
with all entries equal to $-1$ and let us perform the following 
sequence of operations:
\begin{enumerate}[label=(S\arabic{*})]
 \item \label{S1} We insert a $d\times d$ identity matrix in correspondence of the 
                  first $d$ columns of $G_{0,M}$. We call this matrix $G_{1,M}$;
 \item \label{S2} For $1\leq i\leq d$ and $d+1\leq j\leq m$ we set the 
                  $(i,j)\text{-th}$ entry of $G_{1,M}$ 
                  equal $0$ if $(\{1,\ldots,d\}\setminus\{i\})\cup\{j\}$ is not a 
                  basis of $M$. 
                  We call this matrix $G_{2,M}$;
 \item \label{S3} Let $\tilde{G}_{2,M}$ be the $d\times(m-d)$ matrix such that 
       $G_{2,M}=(I_{d}|\tilde{G}_{2,M})$. We set the entries of 
       $\tilde{G}_{2,M}$ that are 
       in the normal frame $\mathcal{P}_{\tilde{G}_{2,M}}$ equal $1$. 
       We call this matrix $\tilde{G}_{3,M}$ and we set 
       $G_{3,M}=(I_{d}|\tilde{G}_{3,M})$;
 \item \label{S4} We call $s_{M}$ the number of $-1$ entries of $G_{3,M}$;
 \item \label{S5} We replace the $-1$ entries of $G_{3,M}$ with symbolic 
       variables $t_{1},\ldots,t_{s_{M}}$ and we call 
       this matrix $G_{M}$.
\end{enumerate}

\begin{algorithm}
\caption{$\mathtt{TestIrreducibility}$\label{alg1}}
\begin{algorithmic}[1]
  \Require $\mathtt{case} = (d,m)$ a pair from Equation~\eqref{cases}.
  \Ensure $\mathtt{True}$ if the reduced realization spaces of all realizable 
matroids of type $\mathtt{case}$ are irreducible, $\mathtt{False}$ otherwise.
  \Statex
  \State {\bfseries Compute} the list $\mathtt{subsets}$ of all subsets of~$d$ 
elements of $\{1, \dotsc, m\}$ and order it w.r.t.\ the reverse 
lexicographic 
term order. 
  \For {$\mathtt{matroid}$ in $\mathtt{all\_matroids[case]}$}
    \State {\bfseries Compute} the first basis for 
$\mathtt{matroid}$ in the 
list $\mathtt{subsets}$ and call it $\mathtt{basis}$.
    \State {\bfseries Set} $G = \mathtt{FillMatrix}(\mathtt{case}, 
\mathtt{basis})$.
    \State \Comment {Computing the (in)equalities for $X_{\mathtt{matroid}}$}.
    \State {\bfseries Substitute} the $-1$ entries of $G$ with symbolic 
variables.
    \State {\bfseries Set} $\mathtt{equalities} = \mathtt{emptylist}$ and 
$\mathtt{inequalities} = \mathtt{emptylist}$.
    \For {$\mathtt{subset}$ in $\mathtt{subsets}$}
      \State {\bfseries Set} $\mathtt{det}$ to be the $d \times d$ minor 
corresponding to the submatrix of $G$ whose columns are prescribed by 
$\mathtt{subset}$. 
      \If {$\mathtt{subset}$ is a basis for $\mathtt{matroid}$} {\bfseries 
Add} $\mathtt{det}$ to $\mathtt{inequalities}$.
      \Else \ {\bfseries Add} $\mathtt{det}$ to $\mathtt{equalities}$.
      \EndIf
    \EndFor
    \State \Comment {Checking irreducibility of the zero set determined by only 
the equalities.}
    \State {\bfseries Set} $\mathtt{ideal}$ to be the ideal generated by 
$\mathtt{equalities}$.
    \If {the zero set of $\mathtt{ideal}$ is not geometrically irreducible}
      \State \Return $\mathtt{False}$.
    \EndIf
  \EndFor
  \State \Return $\mathtt{True}$.
\end{algorithmic}
\end{algorithm}

\begin{definition}\label{reducedvariety}
 For a rank $d$ matroid $M$ with ground set $E=\{1,\ldots,m\}$ and $\{1,\ldots,d\}$ as basis the \emph{reduced variety}
 of $M$ over $\mathbb{C}$ is the quasi-projective variety $X_{M}$ defined by
 \begin{equation*}
 X_{M}=\left\lbrace
       \left(
        z_{1},\ldots,z_{s_{M}}
       \right)
       \in\mathbb{A}_{\mathbb{C}}^{s_{M}}
       \left|
        \begin{array}{lll}
         \det\left(G_{M}^{j_{1}}|\cdots|G_{M}^{j_{d}}\right)\neq0 & \text{if} & \{j_{1},\ldots,j_{d}\}\in\mathfrak{B}    \\
         \det\left(G_{M}^{j_{1}}|\cdots|G_{M}^{j_{d}}\right)=0    & \text{if} & \{j_{1},\ldots,j_{d}\}\notin\mathfrak{B} \\
        \end{array}
       \right.
       \right\rbrace
\end{equation*}
where $G_{M}^{j}$ is the $j\text{-th}$ column of $G_{M}$ and $\mathfrak{B}$ denotes the set of bases of $M$.
\end{definition}

\begin{remark}\label{rational}
 The defining equalities and inequalities of $X_{M}$ have integer coefficients.
\end{remark}

\begin{algorithm}
\caption{$\mathtt{FillMatrix}$}
\label{alg2}
\begin{algorithmic}[1]
  \Require $\mathtt{case} = (d,m)$, a pair from Equation~\eqref{cases}; 
$\mathtt{basis}$, a subset of $\{1, \dotsc, m\}$ of cardinality~$d$.
  \Ensure a matrix $G$, filled with entries belonging to $\{-1,0,1\}$ and 
ensuring Conditions \ref{C2} and \ref{C3} from Definition~\ref{reducedrealizationspace}.
  \Statex
    \State {\bfseries Create} a $d \times m$ matrix~$G$, and fill it with $-1$ 
entries.
    \State {\bfseries Set} $\mathtt{non\_basis}$ to be equal to the set $\{1, 
\dotsc, m\} \setminus \mathtt{basis}$.
    \State \Comment{Imposing Condition \ref{C2}.}
    \State {\bfseries Insert} in $G$ a $d \times d$ identity matrix in 
correspondence to the columns of $\mathtt{basis}$.
    \State \Comment {Inserting as many zeroes as possible in~$G$.}
    \For {$j$ in $\mathtt{non\_basis}$}
      \For {$i \in \{1, \dotsc, d\}$}
        \If {$\bigl( \{1, \dotsc, d\} \setminus \{i\} \bigr) \cup \{j\}$ is 
not a basis of $\mathtt{matroid}$} 
          \State {\bfseries Set} $G(i,j) = 0$.
        \EndIf
      \EndFor
    \EndFor
    \State \Comment {Computing the normal frame and imposing Condition \ref{C3}.}
    \State \Comment {Inserting $1$s column by column.}
    \For {$j$ in $\mathtt{non\_basis}$}
      \State {\bfseries Set} $r = 1$.
      \While {$G(r,j) = 0$}
        \State {\bfseries Increase} $r$ by~$1$.
        \If {$r = d+1$} {\bfseries Break} the loop. 
        \EndIf
      \EndWhile
      \If {$r \leq d$} {\bfseries Set} $G(r,j) = 1$.
      \EndIf
    \EndFor
    \State \Comment {Inserting $1$s row by row.}
    \For {$i \in \{1, \dotsc, d\}$}
      \State {\bfseries Set} $c = 1$.
      \While {$G(i,c) = 0$ or $G(i,c) = 1$}
        \State {\bfseries Increase} $c$ by~$1$.
        \If {$c = m+1$} {\bfseries Break} the loop. 
        \EndIf
      \EndWhile
      \If {$c \leq m$} {\bfseries Set} $G(i,c) = 1$.
      \EndIf
    \EndFor
  \State \Return $G$.
\end{algorithmic}
\end{algorithm}

If we compare Definition~\ref{reducedvariety} and 
Definition~\ref{reducedrealizationspace}, it is not hard to see that the 
quasi-projective variety~$X_{M}$ is isomorphic to the 
space~$\mathcal{R}_{\mathbb{C}}^{R}(M)$ 
endowed with the Zariski topology (see Remark~\ref{KeyNote} for more details).

Taking this into account, from now on we will be concerned with the 
determination of the irreducibility of~$X_M$. Notice that if $d = 1$, $d 
= m$, or $d = m-1$ the reduced variety $X_M$ is either empty (in which 
case $\mathcal{R}_{\mathbb{C}}^{R}(M) = \emptyset$, and so by 
Proposition~\ref{keytool} the matroid~$M$ is non-realizable over~$\mathbb{C}$), 
or equals a point (thus in particular $\mathcal{R}_{\mathbb{C}}^{R}(M)$ is irreducible).

Hence we are left with the cases when $(d,m)$ belongs to
\begin{equation}
\tag{$\ast\ast$}
\label{cases}
  \bigl\{ (2,4), (2,5), (2,6), (2,7), (3,5), (3,6), (3,7), 
(4,6), (4,7), (5,7) \bigr\}
\end{equation}
All matroids in these cases are classified (see~\cite{Matsumoto2011}) and the 
tables describing them are available at
\begin{center}
  \url{http://www-imai.is.s.u-tokyo.ac.jp/~ymatsu/matroid/index.html}
\end{center}

For all matroids~$M$ in the cases covered by Equation~\eqref{cases} we computed 
the equalities and inequalities defining~$X_M$. Notice that, if $\hat{X}_{M}$ is the subset 
of~$\mathbb{A}_{\mathbb{C}}^{s_{M}}$ defined by 
\begin{equation*}
 \hat{X}_{M}=\left\lbrace
       \left(
        z_{1},\ldots,z_{s_{M}}
       \right)
       \in\mathbb{A}_{\mathbb{C}}^{s_{M}}
       \left|
        \begin{array}{lll}
         \det\left(G_{M}^{j_{1}}|\cdots|G_{M}^{j_{d}}\right)=0    & \text{if} & \{j_{1},\ldots,j_{d}\}\notin\mathfrak{B} \\
        \end{array}
       \right.
       \right\rbrace
\end{equation*}
where $G_{M}^{j}$ is the $j\text{-th}$ column of $G_{M}$ and $\mathfrak{B}$ is the set of bases of $M$,
then by elementary topology arguments the 
irreducibility of~$\hat{X}_{M}$ implies the one of~$X_M$, if the latter is non-empty.
We checked by that $\hat{X}_{M}$ is always irreducible, hence we conclude 
that $X_M$ is always either empty, or irreducible. There are algorithms that 
decide whether an algebraic set defined by rational equalities 
(as~$\hat{X}_{M}$, recall Remark~\ref{rational}) 
is irreducible or not (see for example~\cite{Cheze2005}); in our case, via a 
direct inspection helped by computations with Sage, we noticed that all 
sets~$\hat{X}_{M}$ fall into one of these families:
\begin{itemize}
 \item Linear varieties;
 \item Rational hypersurfaces;
 \item Quadrics of rank strictly bigger than~$2$;
\end{itemize}
or are cones over such varieties, and so are irreducible by easy algebraic 
geometry arguments.

\smallskip
The Sage code we used to perform the test is available at the following links:
\begin{enumerate}
 \item http://perso.unifr.ch/elia.saini/hyperplanes.sage
 \item http://matteogallet.altervista.org/main/papers/hyperplanes2015/hyperplanes.sage
\end{enumerate}

The algorithm $\mathtt{TestIrreducibility}$ provided in Algorithm~\ref{alg1} 
describes the pseudocode of the main procedure we implemented and the algorithm 
$\mathtt{FillMatrix}$ presented in Algorithm~\ref{alg2} sketches the pseudocode 
of the ancillary algorithm we used to build the matrix~$G_{M}$ 
(compare the definition of the operations \ref{S1}, \ref{S2}, \ref{S3}, \ref{S4} 
and \ref{S5}).

\begin{remark}
 Notice that the assumption $m \leq 7$ does not play any role in any of the 
algorithms we presented in this Appendix. The only reason to limit ourselves to 
the case $m \leq 7$ is due to the fact that when $m$ is greater than~$7$ first 
of all the total number of matroids becomes significantly bigger, and moreover 
both the number and the degree of the equalities and inequalities 
defining~$X_M$ increases. This implies that the computations whose aim is to 
check whether $X_M$ is irreducible become more and more expensive in terms of 
memory and time, and moreover the cases when $\hat{X}_M$ does not fall into one 
of the simple families of varieties reported above become much more frequent.
Hence one would need to improve the existing algorithm and to find new families 
of algebraic varieties that can ensure irreducibility in order to attack the 
cases when $m > 7$, taking also into account the already known cases of 
matroids for which the variety~$X_M$ is reducible.
\end{remark}

\bibliographystyle{amsalpha}
\bibliography{GS_arXiv_final}

\end{document}